\theoremstyle{plain}
\newtheorem{theorem}{Theorem}[section]
\newtheorem{corollary}[theorem]{Corollary}
\newtheorem{lemma}[theorem]{Lemma}
\newtheorem{claim}[theorem]{Claim}
\newtheorem{proposition}[theorem]{Proposition}
\theoremstyle{definition}
\newtheorem{definition}[theorem]{Definition}
\newtheorem{example}[theorem]{Example}
\newtheorem{conjecture}[theorem]{Conjecture}
\theoremstyle{remark}
\newtheorem{remark}[theorem]{Remark}
\newcommand{\A}{\mathcal{A}}
\newcommand{\scB}{\mathcal{B}}
\newcommand{\C}{\mathbb{C}}
\newcommand{\scC}{\mathcal{C}}
\newcommand{\Q}{\mathbb{Q}}
\newcommand{\R}{\mathbb{R}}
\newcommand{\scQ}{\mathcal{Q}}
\newcommand{\scR}{\mathcal{R}}
\newcommand{\scN}{\mathcal{N}}
\newcommand{\TT}{\mathbb{T}}
\newcommand{\Z}{\mathbb{Z}}
\newcommand{\h}{{\rm h}}
\newcommand{\asc}{{\rm asc}}
\newcommand{\dsc}{{\rm dsc}}
\newcommand{\D}{{\mathcal{D}}}
\newcommand{\GL}{\mathrm{GL}}
\newcommand{\M}{\mathcal{M}}
\newcommand{\lcm}{\operatorname{lcm}}
\newcommand{\quasi}{\operatorname{quasi}}
\newcommand{\rk}{\operatorname{rank}}
\newcommand{\Mat}{\operatorname{Mat}}
\newcommand{\diag}{\operatorname{diag}} 
\newcommand{\tor}{\operatorname{tor}} 
\newcommand{\ord}{\operatorname{ord}}
\newcolumntype{K}[1]{>{\centering\arraybackslash}p{#1}}
\begin{document}

\title[Period collapse in characteristic quasi-polynomials of hyperplane arrangements]{Period collapse in characteristic quasi-polynomials of hyperplane arrangements}

\begin{abstract}
\label{sec:intro}
Given an integral hyperplane arrangement, Kamiya-Takemura-Terao (2008 \& 2011) introduced the notion of characteristic quasi-polynomial, which enumerates the cardinality of the complement of the arrangement modulo a positive integer. The most popular candidate for period of the characteristic quasi-polynomials is the lcm period. In this paper, we initiate a study of period collapse in characteristic quasi-polynomials stemming from the concept of period collapse in the theory of Ehrhart quasi-polynomials. We say that period collapse occurs in a characteristic quasi-polynomial when the minimum period is strictly less than the lcm period. Our first main result is that in the non-central case, with regard to period collapse anything is possible: period collapse occurs in any dimension $\ge 1$, occurs for any value of the lcm period $\ge 2$, and the minimum period when it is not the lcm period can be any proper divisor of the lcm period. Our second main result states that in the central case, however, no period collapse is possible in any dimension, that is, the lcm period is always the minimum period. 
 \end{abstract}

 \author{Akihiro Higashitani}
\address{Department of Pure and Applied Mathematics, Graduate School of Information Science and Technology, Osaka University, Suita, Osaka 565-0871, Japan}
\email{higashitani@ist.osaka-u.ac.jp}
\author{Tan Nhat Tran}
\address{Tan Nhat Tran, Department of Mathematics, Hokkaido University, Kita 10, Nishi 8, Kita-Ku, Sapporo 060-0810, Japan \newline
Current address: Fakult\"at f\"ur Mathematik, Ruhr-Universit\"at Bochum, D-44780 Bochum, Germany}
\email{tan.tran@ruhr-uni-bochum.de}
\author{Masahiko Yoshinaga}
\address{Masahiko Yoshinaga, Department of Mathematics, Hokkaido University, Kita 10, Nishi 8, Kita-Ku, Sapporo 060-0810, Japan.}
\email{yoshinaga@math.sci.hokudai.ac.jp}

\subjclass[2010]{Primary 52C35, Secondary 52C07}
\keywords{hyperplane arrangement, characteristic quasi-polynomial, period collapse, rational polytope, Ehrhart quasi-polynomial, root system}

\date{\today}
\maketitle

\tableofcontents

\section{Introduction}
Let $C$ be an $m \times n$ matrix with integral entries and let $b \in\Z^n$ be an integral vector. 
The matrix $A= \begin{pmatrix} C\\ b\end{pmatrix}$ defines an arrangement $\A$ of $n$ hyperplanes $H_j$ ($1 \le j \le n$) in $\R^m$.
Let $q\in\Z_{>0}$ and denote $\Z_q:=\Z/q\Z$.  
 For $a \in \Z$, let $[a]_q := a + q\Z \in \Z_q$ denote the $q$ reduction of $a$. 
For a matrix or vector $A'$ with integral entries, denote by $[A']_q$ the entry-wise $q$ reduction of $A'$.
The matrix $[A]_q$ defines an arrangement $\A_q$ of $n$ subsets\footnote{These subsets become subgroups in the central case.}  $H_{j,q}$ ($1 \le j \le n$) in $\Z_q^m$.
We call the case $b = 0$ the \emph{central case}, and call the case $b \ne 0$ the \emph{non-central case}.\footnote{Kamiya-Takemura-Terao \cite{KTT11} used the term ``the non-central case" when $b$ is any integral vector.} 
Kamiya-Takemura-Terao \cite{KTT08, KTT11} showed that there exist integers $q_0\in\Z_{\ge0}$ and $\rho\in\Z_{>0}$ such that the cardinality of the complement $\M(\A_q)$ of $\A_q$ is a quasi-polynomial in $q$ with period $\rho$ for all $q>q_0$.
They called this quasi-polynomial the \emph{characteristic quasi-polynomial}, and called the period $\rho$ the \emph{lcm period}.

 Given a rational polytope $P$ in $\R^m$, the \emph{denominator} $D(P)$ of $P$ is the smallest $k \in\Z_{>0}$ for which the vertices of $kP$ are in $\Z^m$. 
Ehrhart \cite{E62} proved that  ${\rm L}_{P}(q):=\#(qP \cap \Z^m)$ is a quasi-polynomial  in $q$ with period $D(P)$. 
We call ${\rm L}_{P}(q)$ the \emph{Ehrhart quasi-polynomial} of $P$. 
The denominator $D(P)$ is  in general not the minimum period  of ${\rm L}_{P}(q)$, and when $D(P)$ is strictly greater than the minimum period, we say that \emph{period collapse} occurs. 
This phenomenon was first systematically studied by McAllister-Woods \cite{MW05}, after an example of a $3$-dimensional pyramid $P$ with denominator $D(P) = 2$ but with the minimum period $1$ has been found by Stanley \cite{St97}.
 McAllister-Woods proved that period collapse never occurs in dimension $1$, however in dimension $2$ and higher, anything is possible: period collapse occurs in any dimension $\ge 2$, occurs for any denominator $\ge 2$, and the minimum period when it is not the denominator can be any proper divisor of the denominator \cite[Theorems 2.1 and 2.2]{MW05}. 
 
Kamiya-Takemura-Terao provided two different proofs for showing that $\#\M(\A_q)$ is indeed a quasi-polynomial. The first proof uses the theory of elementary divisors. The second proof relies on an expression of $\#\M(\A_q)$ as a sum of the Ehrhart quasi-polynomials of polytopes cut out of the unit cube  by the deformations of the hyperplanes in $\A$ \cite[\S2]{KTT08}, \cite[\S3]{KTT11}, or as the Ehrhart quasi-polynomial of an  ``inside-out'' polytope in the sense of Beck-Zaslavsky \cite{BZ06}. 
Both methods are powerful and interesting. 
The former gives explicitly the coefficients of the quasi-polynomials which in turn produces the lcm period the best known candidate for period so far, and it also opens an unexpected connection with the theory of toric arrangements  \cite{TY19, LTY21}. 
The latter, generally, applies only to the central case (see \S\ref{sec:Ehrhart}) but enables one to apply many results in the Ehrhart theory to further discover interesting properties of the arrangement. 
The most notable example that the Ehrhart theory approach is applicable to the non-central case is when the vector configuration arises from root systems. 
For example, it is used to affirmatively settle the ``Riemann hypothesis" a conjecture of Postnikov-Stanley on the roots of the characteristic polynomials of the  extended Linial arrangements \cite{Y18W, Y18L, Tam20}, or used to characterize a geometric-enumerative property of the arrangement the so-called Worpitzky-compatibility \cite{ATY20, TT21}.

Inspired by the notion of period collapse in the Ehrhart theory, in this paper we say that \emph{period collapse} occurs in a characteristic quasi-polynomial when the minimum period is strictly less than the lcm period.
Some examples of such period collapse in the non-central case were found recently. 
The third author  applied the Worpitzky partition of root system's fundamental parallelepiped to compute the  characteristic quasi-polynomials of several deformed Weyl arrangements. 
In particular, he showed that the characteristic quasi-polynomial of any extended Shi arrangement has minimum period $1$ \cite{Y18W}, where previous computation explicitly shows lcm period $> 1$ in all cases but type $A$, e.g., \cite{Su98, KTT10}. 
Tamura \cite{Tam20} enhanced the above calculation in the case of extended Linial arrangements and showed that period collapse occurs in the corresponding characteristic quasi-polynomial for certain parameters  (see \S\ref{sec:rs} for more details).

Thus we know that period collapse with respect to the characteristic quasi-polynomials of non-central arrangements occurs  in any dimension $\ge1$, but can period collapse occur for any value of the lcm period? Can the minimum period be any proper divisor of the lcm period when it is not the lcm period? 
Our first main result in this paper given below will affirmatively answer these questions. 

\begin{theorem} 
\label{thm:main} 
 Given $m \ge 1$, and given $p\ge1$ and $s\ge1$ such that $s\mid p$, there exists a non-central hyperplane arrangement in $\R^m$ whose characteristic quasi-polynomial has lcm period $p$ but has minimum period $s$.
\end{theorem}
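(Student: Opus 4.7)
My plan is to exhibit, for every triple $(m, p, s)$ with $m \ge 1$, $p \ge 1$, and $s \mid p$, a single explicit family of arrangements in $\R^m$. I would take
\[
\A = \A(m, p, s) := \bigl\{ H_j : p x_1 = s j,\ j = 1, 2, \ldots, p/s\bigr\},
\]
consisting of $p/s$ parallel hyperplanes orthogonal to the first coordinate direction, with the remaining $m-1$ coordinates left free. Since $b = (s, 2s, \ldots, p)^\top \ne 0$, the arrangement is non-central. The extremes of the parameter range bracket the family: $s = p$ produces a single hyperplane $px_1 = p$, while $s = 1$ produces the full list $px_1 = 1, 2, \ldots, p$.

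First I would verify that the lcm period of $\A$ equals $p$. The normal matrix $C$ has a single nonzero row $(p, p, \ldots, p)$, so every column submatrix $C_J$ has rank $1$ with largest elementary divisor $p$; hence the lcm over the central subsets (which, since the hyperplanes are pairwise parallel and disjoint, are just the singletons) equals $p$. Next I would compute the characteristic quasi-polynomial directly. For $q \ge p$ the $H_{j, q}$ are pairwise disjoint, so writing $d = \gcd(p, q)$ one has $|H_{j, q}| = q^{m-1} \cdot d \cdot [d \mid sj]$, and $d \mid sj$ is equivalent to $(d/\gcd(d, s)) \mid j$. Because $s \mid p$ forces $\lcm(d, s) \mid p$, the quotient $d/\gcd(d, s) = \lcm(d, s)/s$ divides $p/s$, and the number of admissible $j \in \{1, \ldots, p/s\}$ is exactly $(p/s)\,\gcd(d, s)/d$. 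Summing and using $\gcd(d, s) = \gcd(s, q)$ (since $s \mid p$) yields the clean formula
\[
\#\M(\A_q) = q^{m-1}\bigl(q - (p/s)\,\gcd(s, q)\bigr), \qquad q \ge p.
\]

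From this closed form the minimum period analysis is immediate: the constituent depends on $q$ only through $q$ itself and $\gcd(s, q)$, hence only through $q \bmod s$, so the minimum period divides $s$. To show it equals $s$, for each proper divisor $\tau$ of $s$ I would take the residues $r = 0$ and $r = \tau$, which are congruent modulo $\tau$ but satisfy $\gcd(s, 0) = s$ while $\gcd(s, \tau) = \tau$; since $p/s \ge 1$ the two constituents differ, so no proper divisor of $s$ can serve as a period. The main obstacles will be purely bookkeeping: pinning down the precise Kamiya-Takemura-Terao convention for the lcm period to make the lcm-period step rigorous, and tracking the indicator $[d \mid sj]$ cleanly through the inclusion sum. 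No appeal to deeper arrangement or Ehrhart machinery should be required.
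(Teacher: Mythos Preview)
Your construction is correct and is genuinely simpler than the paper's. The paper builds an $(m+1)\times(p+m)$ matrix mixing the standard basis vectors, a column $(0,\ldots,0,s,0)^T$, and $p$ columns of the form $(1,\ldots,1,p,r)^T$; the resulting arrangement has $p+m$ hyperplanes in general position enough that many subsets $J$ must be tracked, and the closed form \eqref{eq:explicit} involves a full alternating sum in $k$. Your arrangement, by contrast, uses $p/s$ pairwise disjoint parallel hyperplanes in a single coordinate direction, so inclusion--exclusion collapses to a single subtraction and the quasi-polynomial is just $q^{m-1}\bigl(q-(p/s)\gcd(s,q)\bigr)$. This buys a one-line computation of both the lcm period and the minimum period, and it also makes transparent why the dependence on $q$ factors through $\gcd(s,q)$ --- something the paper extracts only after the longer case analysis. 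What the paper's more elaborate example buys in return is a richer testing ground for its second, Ehrhart-theoretic proof via $\GL_m(\Z)$-equidecomposability, which your rank-one example would not exercise.

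One small correction to your write-up: the lcm period $\rho_C$ is defined as the lcm of $e_{J,\ell(J)}$ over \emph{all} nonempty $J\subseteq[n]$, not only over the ``central'' ones (those with $\ell'(J)=\ell(J)$). In your example every nonempty $J$ still gives $C_J$ with identical columns $(p,0,\ldots,0)^T$, hence $\ell(J)=1$ and $e_{J,1}=p$, so the conclusion $\rho_C=p$ stands; just adjust the justification accordingly.
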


\noindent
We also provide two proofs of  Theorem \ref{thm:main}. 
The first proof using the elementary divisors method of Kamiya-Takemura-Terao is standard and direct. 
The second proof based on the concept of $\GL_m(\Z)$-equidecomposable polytopes encompasses more important aspects of our discussion. 
The approach we used to describe the Ehrhart-theoretic formula of $\#\M(\A_q)$ is different and ``complement" to the approach above: instead of counting the lattice points in the polytopes, we count the lattice points in the deformations of the hyperplanes in $\A$ within the unit cube. 
In addition, it gives a new class of non-central arrangements not arising from root systems where the Ehrhart theory can be effectively applied. 

 Determining the minimum period of a quasi-polynomial is natural but in general a challenging problem. 
Since their introduction, the methods above motivated several discussions on the minimum period of the characteristic quasi-polynomial in the central case, e.g., \cite{KTT11, CW12, Tan20}, as well as help to derive many computational results, e.g., \cite{KTT10, T19} that support the equality between the  minimum period and the  lcm period. 
 However, to date, no proof has been known. 
 Recently, a complete description of the constituents of the characteristic quasi-polynomial was found via the theory of toric arrangements due to the second author and the third author \cite{TY19}. 
Using this approach, we are able to affirmatively prove that this is always the case.
 \begin{theorem} 
\label{thm:central} 
If $C$ is an $m \times n$ integral matrix with nonzero columns for $m, n\ge 1$, then the lcm period of the characteristic quasi-polynomial of the matrix $\begin{pmatrix} C\\ 0\end{pmatrix}$  is always the minimum period.
\end{theorem}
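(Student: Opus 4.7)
The plan is to combine the elementary-divisor description of $\chi(q)$ with an indicator-basis expansion of its constituents, and then invoke the complete description of constituents from \cite{TY19}. Starting from
\[
\chi(q)=\sum_{J\subseteq[n]}(-1)^{|J|}q^{m-r(J)}\prod_{i=1}^{r(J)}\gcd(d_i(J),q),
\]
with $\rho=\mathrm{lcm}\{d_{r(J)}(J):\emptyset\ne J\subseteq[n]\}$, I would substitute the identity $\gcd(d,t)=\sum_{k\mid d}\phi(k)\mathbf{1}[k\mid t]$ into the $t$-constituent to obtain $\chi_t(q)=\sum_{L\mid\rho}\mathbf{1}[L\mid t]\,\alpha_L(q)$ for explicit polynomials $\alpha_L(q)\in\Z[q]$. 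Because the functions $\{\mathbf{1}[L\mid t]\}_{L\mid\rho}$ are linearly independent on $\Z/\rho\Z$, the minimum period of $\chi$ equals $\mathrm{lcm}\{L:\alpha_L\not\equiv 0\}$; this automatically divides $\rho$, and by localizing at primes it suffices to show that for every prime $p\mid\rho$ with $a:=v_p(\rho)$, some $L_p\mid\rho$ with $v_p(L_p)=a$ has $\alpha_{L_p}(q)\not\equiv 0$.

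For each such prime $p$, I would first produce an \emph{independent} subset $J^*\subseteq[n]$ (i.e.\ $|J^*|=r(J^*)$) satisfying $v_p(d_{r(J^*)}(J^*))=a$. Starting from any $J_0$ witnessing $v_p(d_{r(J_0)}(J_0))=a$, I restrict to a linearly independent $J^*\subseteq J_0$ of the same rank; the inclusion $\mathrm{im}(C_{J^*})\subseteq\mathrm{im}(C_{J_0})$ of equal-rank sublattices induces a surjection on torsion, forcing $d_{r(J_0)}(J_0)\mid d_{r(J^*)}(J^*)$, and combined with $d_{r(J^*)}(J^*)\mid\rho$ this yields $v_p(d_{r(J^*)}(J^*))=a$. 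Set $L_p:=d_{r(J^*)}(J^*)$; then $L_p\mid\rho$ with $v_p(L_p)=a$. The independent set $J^*$ contributes a term of sign $(-1)^{r(J^*)}$ and strictly positive magnitude $\beta_{L_p}(J^*)\ge\prod_{i=1}^{r(J^*)}\phi(d_i(J^*))>0$ to the coefficient of $q^{m-r(J^*)}$ in $\alpha_{L_p}(q)$ (the lower bound coming from the choice $\vec k=(d_1(J^*),\dots,d_{r(J^*)}(J^*))$). The remaining task is to show this contribution is not cancelled by contributions from other, necessarily dependent, subsets $J$ with $r(J)=r(J^*)$.

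This non-cancellation is where the description from \cite{TY19} becomes essential. Their main theorem identifies each $\alpha_L(q)$ with the characteristic polynomial of a specific layer of the central toric arrangement associated to $C$; in this picture, the leading coefficient of that characteristic polynomial is a genuine combinatorial invariant of the underlying matroid (an independent-set enumeration or M\"obius-function value), not an arbitrary signed sum that might accidentally vanish. Since the independence of $J^*$ with $d_{r(J^*)}(J^*)=L_p$ exhibits a nonempty layer corresponding to $L_p$, the relevant coefficient is nonzero, whence $\alpha_{L_p}\not\equiv 0$. Combining over all primes $p\mid\rho$ gives $\mathrm{lcm}\{L:\alpha_L\not\equiv 0\}=\rho$, so the minimum period of $\chi$ equals $\rho$. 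The hard part will be to match the indicator-basis expansion cleanly with the layer decomposition of \cite{TY19} and to read off non-vanishing from the matroidal description; the centrality hypothesis is crucial here, since the non-central case (Theorem~\ref{thm:main}) exhibits exactly the type of cancellation that this argument rules out.
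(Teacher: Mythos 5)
Your plan is essentially the paper's proof with different bookkeeping, and the part you defer is exactly where the paper's content lies. The paper invokes \cite[Theorem 4.7]{TY19}, which gives $f^k_C(t)=\sum_{Z\in L[k]}\mu(\TT,Z)\,t^{\dim Z}$ where $L[k]$ is the set of connected components (layers) of intersections that contain a $k$-torsion element; a layer $Z$ lies in $L[k]$ exactly when $\tau(Z)\mid k$, where $\tau(Z)$ is the least order of a torsion point of $Z$. In your indicator-basis expansion this means
\[
\alpha_L(q)=\sum_{Z:\ \tau(Z)=L}\mu(\TT,Z)\,q^{\dim Z},
\]
so $\alpha_L$ is \emph{not} the characteristic polynomial of a single layer, as you suggest, but a sum over all layers with $\tau(Z)=L$. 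The non-cancellation then comes not from any matroidal identification of a coefficient but from the sign condition $(-1)^{m-\dim Z}\mu(\TT,Z)>0$ \cite[Corollary 3.6]{TY19}: all contributions to the coefficient of $q^{\dim Z}$ in $\alpha_L$ have the same sign, so $\alpha_L\neq 0$ whenever some layer $Z$ has $\tau(Z)=L$. Your construction of an independent $J^*$ realizing the maximal $p$-valuation is correct but redundant once this is in place: the paper's Lemma~\ref{lem:max} already shows $e_{J,\ell(J)}=\tau(Z)$ for some component $Z$ of $T_J$, so $\rho_C=\lcm\{\tau(Z)\}$ directly, and the prime-by-prime reduction is also unnecessary. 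Your linear-independence argument for why the minimum period equals $\lcm\{L:\alpha_L\not\equiv 0\}$ is sound and is a clean repackaging of the paper's coefficientwise argument via $\rho_{C,r}$. In short, the plan is correct and lands on the same two ingredients the paper uses (the layer-poset description of constituents and positivity of $(-1)^{m-\dim Z}\mu$), but the route via $\gcd$-expansion, prime localization, and independent subsets is longer than necessary, and the identification of $\alpha_L$ with a single layer's characteristic polynomial should be corrected.
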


Studying periodicity behavior and period collapse is an active area in the Ehrhart theory, for some recent discussions and developments see, e.g., \cite{BSW08, MM17, KW18}. 
One may investigate many similar aspects regarding period collapse in the characteristic quasi-polynomials.

The remainder of the paper is organized as follows. 
In \S \ref{sec:CQP}, following \cite[\S3]{KTT11}, we recall the definition of integral hyperplane arrangement modulo a positive integer and basic facts on the corresponding characteristic quasi-polynomial via elementary divisors. 
In \S \ref{sec:main}, we give the first proof of Theorem \ref{thm:main}  by using the formula of the characteristic quasi-polynomial from \S\ref{sec:CQP}. 
In \S \ref{sec:Ehrhart}, we give the second proof of Theorem \ref{thm:main} via Ehrhart theory. 
In \S \ref{sec:rs}, we discuss period collapse in characteristic quasi-polynomials of certain subarrangements of the extended Shi and Linial arrangements.
In \S \ref{sec:open}, we give a proof of Theorem \ref{thm:central}.
 \section{Recollection of characteristic quasi-polynomials}
\label{sec:CQP}

A function $\varphi: \Z \to \C$ is called a \emph{quasi-polynomial} if there exist a positive integer $\rho\in\Z_{>0}$ and polynomials $f^k(t)\in\Q[t]$ ($1 \le k \le \rho$) such that for any $q\in\Z_{>0}$ with  $q\equiv k\bmod \rho$, 
\begin{equation*}
\varphi(q) =f^k(q).
\end{equation*}
The number $\rho$ is called a \emph{period} and the polynomial $f^k(t)$ is called the \emph{$k$-constituent} of the quasi-polynomial $\varphi$. 
Equivalently, $\varphi$ is of the form
$$\varphi(q) = d_m(q)q^m + d_{m-1}(q)q^{m-1} + \dots +d_0(q),$$
where each $d_i:\Z \to \Q$ is a periodic function with integral period, i.e., $d_i(q)=d_i(q+\tau_i)$ for $\tau_i \in \Z$. 
In this case, the above period $\rho$ can be chosen as $\rho = \lcm(\tau_0,\tau_1,\ldots,\tau_m)$.
The smallest such $\rho$ is called the \emph{minimum period} of the quasi-polynomial $\varphi$. 
The minimum period is necessarily a divisor of any period. 
It is not hard to see that if $\tau_i$ is the minimum period of $d_i$ for each $i$, then $\lcm(\tau_0, \ldots,\tau_m)$ is the minimum period of $\varphi$.

A quasi-polynomial $\varphi$ with a period $\rho$  is said to have the \emph{gcd property} with respect to $\rho$ if the $k$-constituent $f^k(t)$ ($1 \le k \le \rho$) depends on $k$ only through $\gcd(k,\rho)$, i.e., $f^a(t)=f^{b}(t)$ if $\gcd(a, \rho)=\gcd(b, \rho)$. 

Let $m,n\in\Z_{>0}$ be positive integers. 
Denote by $\Mat_{m\times n}(\Z)$ the set of all $m \times n$ matrices with integer entries. 
Let $C=(c_1,\ldots,c_n) \in \Mat_{m\times n}(\Z)$ with $c_j =(c_{1j},\ldots,c_{mj})^T \ne (0,\ldots,0)^T$ ($1\le j\le n$) and let $b=(b_1,\ldots,b_n) \in\Z^n$. 
Here, $^T$ stands for the transpose of a matrix. 
Set $A = \begin{pmatrix} C\\ b\end{pmatrix}  \in \Mat_{(m + 1)\times n}(\Z)$. 
The matrix $A$ defines the following hyperplane arrangement in $\R^m$
$$\A =\A_A := \{ H_j : 1\le j\le n\},$$
where
$$H_j = H_{c_j}  := \{x=(x_1,\ldots,x_m)\in \R^m: xc_j=b_j\}.$$
Let $q\in\Z_{>0}$. The \emph{$q$ reduction} $\A_q$ of $\A$ is defined by 
$$\A_q := \{ H_{j,q}: 1\le j\le n\},$$
where
$$H_{j,q} := \{ z=(z_1,\ldots,z_m)\in \Z_q^m: z[c_j]_q=[b_j]_q\}.$$
Denote $\Z_q^{\times}:=\Z_q \setminus \{0\}$.
The complement $\M(\A_q)$ of $\A_q$ is defined by 
$$\M(\A_q) := \Z_q^m \setminus\bigcup_{j=1}^n H_{j,q} = \{z\in \Z_q^m: z[c_j]_q-[b_j]_q \in (\Z_q^{\times})^n\}.$$

For integers $a\le b$ and $n\ge1$, denote $[a,b]:=\{k\in\Z\mid a \le k\le b\}$ and $[n]:=[1,n]$. 
For $\emptyset \ne J \subseteq [n]$, define $A_J := \begin{pmatrix} C_J\\ b_J\end{pmatrix}  \in \Mat_{(m + 1)\times \#J}(\Z)$, where $C_J \in \Mat_{m\times \#J}(\Z)$ is the submatrix of $C$ consisting of the columns indexed by $J$ and $b_J \in  \Z^{\#J}$ is the subvector of $b$ consisting of the entries indexed by $J$.
Set $\ell(J):=\rk C_J$ and $\ell'(J):=\rk A_J$.
Let $e_{J,1} \mid e_{J,2} \mid \cdots \mid e_{J,\ell(J)}$ be the elementary divisors of $C_J$ and let $e'_{J,1} \mid e'_{J,2} \mid \cdots \mid e'_{J,\ell'(J)}$  be the elementary divisors of $A_J$.
Define the \emph{lcm period} of $C$ by
$$\rho_C:= \lcm (e_{J,\ell(J)}: \emptyset \ne J \subseteq [n]).$$
Define
$$q_0:= \max \{e_{J,\ell'(J)}: \ell'(J)=\ell(J)+1,  \emptyset \ne J \subseteq [n] \}.$$
When $\{ \emptyset \ne J \subseteq [n]:  \ell'(J)=\ell(J)+1 \} = \emptyset $, we agree that $q_0=0$.
Define
$$
\tilde{d}_J(q):=
\begin{cases}
\prod_{j=1}^{\ell(J)} \gcd(e_{J, j}, q) & \mbox{if $\gcd(e_{J, j}, q)=\gcd(e'_{J, j},q)$ for all $1 \le j \le \ell(J)$}, \\
0& \mbox{otherwise}.
\end{cases}
$$

\begin{theorem}{\cite[Theorem 3.1]{KTT11}}
\label{thm:KTT}
There exists a monic quasi-polynomial $\chi^{\quasi}_{\A}(q)$  with period $\rho_{C}$ having the gcd property w.r.t. $\rho_{C}$ such that $\#\M(\A_q)  = \chi^{\quasi}_{\A}(q)$ for all $q>q_0$. 
This quasi-polynomial is called the \emph{characteristic quasi-polynomial} of $\A$ (or of $A$). 
More precisely, the following holds: 
\begin{equation*}
\label{eq:KTT-thm} 
\chi^{\quasi}_{\A}(q) =q^m+ \sum_{J:\,\,\emptyset \ne J \subseteq [n],\,  \ell'(J)=\ell(J)} (-1)^{\#J} \tilde{d}_J(q)q^{m-\ell(J)}.
\end{equation*}
\end{theorem}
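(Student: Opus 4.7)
The plan is to apply inclusion--exclusion and then evaluate each intersection cardinality via the Smith normal form. Since $\M(\A_q) = \Z_q^m \setminus \bigcup_{j=1}^n H_{j,q}$, I would first write
\begin{equation*}
\#\M(\A_q) = \sum_{J \subseteq [n]} (-1)^{\#J}\, N_J(q),
\qquad
N_J(q) := \#\bigl\{\, z \in \Z_q^m : z[c_j]_q = [b_j]_q \text{ for all } j \in J\,\bigr\},
\end{equation*}
with $N_\emptyset(q) = q^m$. The crux is then an explicit formula for $N_J(q)$ when $J \ne \emptyset$.

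For that, I would exploit the Smith normal form $C_J = UDV$ with $U \in \GL_m(\Z)$, $V \in \GL_{\#J}(\Z)$ and $D$ the $m \times \#J$ matrix carrying the entries $e_{J,1}, \ldots, e_{J,\ell(J)}$ on its diagonal. The unimodular substitution $w = zU$, $c = b_J V$ decouples the congruence $zC_J \equiv b_J \pmod q$ into $\ell(J)$ scalar congruences $e_{J,j}\, w_j \equiv c_j \pmod q$, trivial equations $0 \equiv c_j \pmod q$ for $\ell(J) < j \le \#J$, and $m - \ell(J)$ unconstrained coordinates contributing a factor $q^{m-\ell(J)}$. Each scalar congruence contributes $\gcd(e_{J,j}, q)$ solutions when solvable, and none otherwise. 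Matching these solvability conditions against the standard Smith-form criterion for $A_J$, I would see that: (a) when $\ell'(J) = \ell(J)$, consistency is equivalent to $\gcd(e_{J,j}, q) = \gcd(e'_{J,j}, q)$ for every $j$, yielding $N_J(q) = \tilde{d}_J(q)\, q^{m - \ell(J)}$; (b) when $\ell'(J) = \ell(J) + 1$, the extra elementary divisor of $A_J$ produces an obstruction which kills the count whenever $q > q_0$, forcing $N_J(q) = 0$.

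Substituting back into the inclusion--exclusion identity gives the claimed formula, and it remains to confirm the structural properties. Each $\tilde{d}_J$ is bounded and periodic in $q$ with period dividing $e_{J, \ell(J)}$, hence dividing $\rho_C$, so termwise the sum is a quasi-polynomial of period dividing $\rho_C$. The $J = \emptyset$ term contributes the leading piece $q^m$, while every other summand has degree $m - \ell(J) < m$, so the expression is monic of degree $m$. Finally, for the gcd property one observes that $\gcd(e, q)$ depends on $q$ only through $\gcd(q, \rho_C)$ whenever $e \mid \rho_C$. The main obstacle I anticipate is making this last point water-tight: the consistency condition hidden inside $\tilde{d}_J$ also involves the divisors $e'_{J,j}$ of $A_J$, and one must verify that these divide $\rho_C$ as well when $\ell'(J) = \ell(J)$. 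I would handle this delicate point by noting that in the rank-preserving case $b_J$ lies in the $\Q$-rowspan of the rows of $C_J$, so an SNF analysis of $A_J$ bounds each $e'_{J,j}$ by $e_{J, \ell(J)}$, and hence by $\rho_C$.
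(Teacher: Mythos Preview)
The paper does not supply its own proof of this statement: Theorem~\ref{thm:KTT} is quoted from \cite[Theorem~3.1]{KTT11} and used as a black box throughout. Your proposal reproduces the original Kamiya--Takemura--Terao argument (inclusion--exclusion over $J\subseteq[n]$ followed by a Smith normal form count of each $N_J(q)$), and the outline is correct.

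On the point you flag as delicate: when $\ell'(J)=\ell(J)$ the matrix $A_J$ is obtained from $C_J$ by appending one row without increasing the rank, and the classical interlacing of invariant factors then gives $e'_{J,j}\mid e_{J,j}$ for every $j\le\ell(J)$; hence $e'_{J,j}\mid e_{J,\ell(J)}\mid\rho_C$, which is exactly what you need for the gcd property. Likewise, in case~(b) one should spell out that after reducing $C_J$ to Smith form the residual consistency constraints $0\equiv c_j\pmod q$ for $j>\ell(J)$ involve integers whose nonvanishing is governed by $e'_{J,\ell'(J)}$, so that $q>q_0\ge e'_{J,\ell'(J)}$ forces $N_J(q)=0$.
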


The name ``characteristic quasi-polynomial" is made by inspiration of the following fact. 
Let $\chi_\A( t)$ denote the \emph{characteristic polynomial} (e.g., \cite[Definition 2.52]{OT92}) of $\A$.
\begin{theorem}[{e.g., \cite[Remark 3.3]{KTT11}}]
\label{thm:KTT11} 
Let  $f_\A^k(t)\in\Z[t]$ ($1 \le k \le \rho_C$) denote the  $k$-constituent of $\chi^{\quasi}_{\A}(q)$. 
The following holds: 	
$$f^1_{\A}(t)=\chi_{\A}(t).$$
\end{theorem}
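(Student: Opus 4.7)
The plan is to read off $f^1_{\A}(t)$ directly from the explicit formula in Theorem~\ref{thm:KTT} and then identify the result with Whitney's formula for the characteristic polynomial.

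Since $f^1_{\A}(t)$ is a polynomial and coincides with $\chi^{\quasi}_{\A}(q)$ on the infinite set $\{q > q_0 : q \equiv 1 \pmod{\rho_C}\}$, it suffices to compute $\chi^{\quasi}_{\A}(q)$ on any infinite subset thereof. I would strengthen the congruence and pick $q > q_0$ with $q \equiv 1 \pmod{\rho_C \cdot E}$, where
$$
E := \lcm\{e'_{J,j} : \emptyset \ne J \subseteq [n],\ \ell'(J) = \ell(J),\ 1 \le j \le \ell(J)\}.
$$
Infinitely many such $q$ exist (take $q = 1 + k\rho_C E$ for $k \gg 0$). By the definition of $\rho_C$, every $e_{J,j}$ divides $\rho_C$, so $\gcd(e_{J,j}, q) = 1$; by construction of $E$, $\gcd(e'_{J,j}, q) = 1$ as well. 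Hence the compatibility condition $\gcd(e_{J,j}, q) = \gcd(e'_{J,j}, q)$ in the definition of $\tilde{d}_J(q)$ is satisfied for all relevant $J$, giving $\tilde{d}_J(q) = \prod_{j=1}^{\ell(J)} 1 = 1$. Substituting into Theorem~\ref{thm:KTT} and using that polynomials are determined by infinitely many values yields
$$
f^1_{\A}(t) = t^m + \sum_{\substack{\emptyset \ne J \subseteq [n] \\ \ell'(J) = \ell(J)}} (-1)^{\#J} t^{m - \ell(J)}.
$$

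To close, I would invoke Whitney's theorem for the characteristic polynomial,
$$
\chi_{\A}(t) = \sum_{\substack{J \subseteq [n] \\ \bigcap_{j \in J} H_j \ne \emptyset}} (-1)^{\#J} t^{m - \rk C_J},
$$
where the summand for $J = \emptyset$ is $t^m$. By the Rouch\'e--Capelli theorem, the solvability condition $\bigcap_{j \in J} H_j \ne \emptyset$ is exactly $\rk A_J = \rk C_J$, i.e., $\ell'(J) = \ell(J)$; and $\rk C_J = \ell(J)$ by definition. The two expressions then match term by term, proving $f^1_{\A}(t) = \chi_{\A}(t)$. The only mildly delicate point is arranging the congruence so that $\tilde{d}_J(q) = 1$ uniformly across all $J$ with $\ell'(J) = \ell(J)$; everything else is direct bookkeeping on the Kamiya--Takemura--Terao formula.
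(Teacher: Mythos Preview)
The paper does not supply its own proof of this statement: Theorem~\ref{thm:KTT11} is quoted from \cite[Remark~3.3]{KTT11} and left unproved, so there is nothing to compare against. Your argument is correct. Reading off $f^1_{\A}$ from the explicit formula of Theorem~\ref{thm:KTT} on a residue class where every $\tilde d_J$ collapses to~$1$, and then matching the result with Whitney's subset expansion of $\chi_{\A}$ via the Rouch\'e--Capelli criterion $\ell'(J)=\ell(J)\Leftrightarrow\bigcap_{j\in J}H_j\ne\emptyset$, is exactly the natural route.

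One small simplification: the auxiliary factor $E$ is in fact unnecessary. When $\ell'(J)=\ell(J)$, deleting the last row of $A_J$ yields $C_J$, and the interlacing inequalities for invariant factors under row deletion give $e'_{J,j}\mid e_{J,j}$ for every $j\le \ell(J)$. Hence $e'_{J,j}\mid\rho_C$ already, and $q\equiv 1\pmod{\rho_C}$ alone forces $\gcd(e_{J,j},q)=\gcd(e'_{J,j},q)=1$. Your introduction of $E$ is harmless and makes the argument self-contained without invoking that divisibility, so this is purely cosmetic.
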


 \section{Proof of the first main result: the non-central case}
\label{sec:main} 
In this section, we prove the first main result (Theorem~\ref{thm:main}) of the paper by using the formula of the characteristic quasi-polynomial derived from the elementary divisors method (Theorem \ref{thm:KTT}). 
We will use in the proof several notations introduced in the previous section.
\begin{proof}[\textbf{Proof of  Theorem \ref{thm:main}}]
Given $m\ge 1$, and given $p\ge1$ and $s\ge1$ such that $s\mid p$, let $A=(a_1, \ldots,a_{p+m})$ be an $(m + 1) \times (p+m)$ integral matrix  defined by
$$
A = 
\begin{pmatrix}
    1 & \cdots & 0& 0 & 1 & 1  & \cdots  & 1 \\
    \vdots & \ddots & \vdots & \vdots  & \vdots&\vdots &  & \vdots \\
   0 & \cdots & 1 & 0 & 1 & 1  &\cdots  & 1 \\
      0 & \cdots & 0 & s & p & p  & \cdots  & p \\
       0 & \cdots &  0 &  0 &  1 &  2  & \cdots  &   p 
       \end{pmatrix}.
$$
Thus matrix $A$ defines a non-central hyperplane arrangement $\A$ in $\R^m$. 
We show that the characteristic quasi-polynomial $\chi^{\quasi}_{\A}(q)$ of $\A$ has lcm period $p$ but has minimum period $s$. 
More explicitly, we show the following formula
\begin{equation}
\label{eq:explicit} 
\chi^{\quasi}_{\A}(q) = \sum_{k=0}^{m} (-1)^k \left[\left(p\binom{m-1}{k-2}+\binom{m-1}{k-1}\right)\gcd(q,s)+ p\binom{m-1}{k-1}+\binom{m-1}{k}  \right]q^{m-k}.
\end{equation}

Let $C=(c_1, \ldots,c_{p+m})$ be the $m \times (p+m)$ integral matrix obtained by removing the last row of matrix $A$. 
By Theorem \ref{thm:KTT},
\begin{equation}
\label{eq:KTT} 
\chi^{\quasi}_{\A}(q) =q^m+ \sum_{J:\,\,\emptyset \ne J \subseteq [p+m],\, \ell'(J)=\ell(J)} (-1)^{\#J} \tilde{d}_J(q)q^{m-\ell(J)}.
\end{equation}
We classify the subsets $J$ such that $\emptyset \ne J \subseteq [p+m]$ into two cases: (a) $\emptyset \ne J \subseteq [m]$, and (b) $J=J_1 \sqcup J_2$ where $J_1\subseteq [m], \emptyset \ne J_2\subseteq [m+1,p+m]$. 

Case (a): $\emptyset \ne J \subseteq [m]$. Every subset $J$ in this case satisfies $\ell'(J)=\ell(J)$ and $1 \le \ell(J) \le m$. 
Clearly, $A_J$ and $C_J$ have the same elementary divisors.
If $\ell(J)=k$ then $\#J=k$ since any $k$ columns of the matrix $(c_1, \ldots,c_{m})$ are linearly independent. 
If $a_m=(0,\ldots,0,s,0)^T$ is not a column of $A_J$, then obviously, $A_J$ has the elementary divisors $1,\ldots,1$, hence $\tilde{d}_J(q)=1$.
Otherwise, $A_J$ has the elementary divisors $1,\ldots,1,s$ in which case $\tilde{d}_J(q)=\gcd(q,s)$. 
Thus these subsets contribute to $\chi^{\quasi}_{\A}(q)$ the following quantity
\begin{equation}
\label{eq:case-a} 
\sum_{k=1}^{m} (-1)^k \left[\binom{m-1}{k}+\binom{m-1}{k-1}  \gcd(q,s) \right]q^{m-k}.
\end{equation}
 
Case (b): $J=J_1 \sqcup J_2$ where $J_1\subseteq [m], \emptyset \ne J_2\subseteq [m+1,p+m]$. 
Since $\ell'(J)=\ell(J)$, the matrix $A_J$ can not have two distinct columns $a_i, a_j$ for $i, j \in [m+1,p+m]$. 
By the same reason, it can not happen that $J_1= [m]$.
Therefore, $J_1\subsetneq [m]$ and $|J_2|=1$. 
Thus, $A_J$ has linearly independent columns, so does $C_J$. 
Note that $1 \le \ell(J) \le m$. 
If $a_m$ is a column of $A_J$, then $A_J=(A_{J_1} ,A_{J_2})$ and $C_J=(C_{J_1} ,C_{J_2})$ have the same elementary divisors $1,\ldots,1,s$ (regardless of the value of  $\ell(J)$). 
The crucial point here is that the column $A_{J_2}=(1,\ldots,1,p,r)^T$ (for some $1 \le r \le p$) has all 1's at its first $m-1$ entries, while the matrix $A_{J_1}$ consists of at most $m-2$ columns of the matrix $(c_1, \ldots,c_{m-1})$. 
It follows that $\tilde{d}_J(q)=\gcd(q,s)$. 
If $1 \le \ell(J) \le m-1$ and $a_m$ is not a column of $A_J$, then by a similar reason, $A_J$ and $C_J$ have the same elementary divisors $1,\ldots,1$. 
Thus the subsets $J$ with $1 \le \ell(J) \le m-1$ contribute to $\chi^{\quasi}_{\A}(q)$ the following quantity
\begin{equation}
\label{eq:case-b1} 
\sum_{k=1}^{m-1} (-1)^kp \left[\binom{m-1}{k-1}+\binom{m-1}{k-2}  \gcd(q,s) \right]q^{m-k}.  
\end{equation}
If  $\ell(J)=m$ and $a_m$ is not a column of $A_J$, then obviously,  $A_J=(c_1, \ldots,c_{m-1},A_{J_2})$. 
Thus $C_J$ has the elementary divisors $1,\ldots,1,p$, while $A_J$ has the elementary divisors $1,\ldots,1,\gcd(p,r)$. 
It is a trivial fact that if $a \mid p$, then $\#\{r \in [p]:a\mid r\}= \frac{p}{a}$.
Since the $\gcd$ is an associative function: $\gcd(q, \gcd(p, r)) = \gcd(\gcd(q, p), r)$, we have  $\#\{r \in [p]: \gcd(q, p) = \gcd(q, \gcd(p, r))\}=\#\{r \in [p]: \gcd(q, p) \mid r\}= \frac{p}{\gcd(q, p)}$.
Thus the subsets $J$ with $\ell(J)=m$ contribute to $\chi^{\quasi}_{\A}(q)$ the following quantity
\begin{equation}
\label{eq:case-b2} 
 (-1)^m \left[ p(m-1)\gcd(q,s) + p\right].  
 \end{equation}

Combining Eq. \eqref{eq:KTT}-\eqref{eq:case-b2}, we obtain Eq. \eqref{eq:explicit}. 
Along the proof, we see that $\chi^{\quasi}_{\A}(q)$ has lcm period $p$. 
The fact that $s$ is a period of $\chi^{\quasi}_{\A}(q)$ is clear from Eq. \eqref{eq:explicit} because each coefficient is a periodic function with period $s$: $\gcd(q,s)=\gcd(q+s,s)$. 
Finally, we show that $s$ is indeed the minimum period.
Let $s'$ be the minimum period. Note that $s' \mid s$. 
Let $d_0(q)$ denote the constant coefficient of $\chi^{\quasi}_{\A}(q)$. 
Thus $d_0(q) =  (-1)^m \left[(p(m-1)+1)\gcd(q,s)+p\right]$.
Since $s'$ is a period, $d_0(q)=d_0(q+s')$ which implies that $\gcd(q,s)=\gcd(q+s',s)$ for all $q>q_0$.
Let $q=Ms$ for sufficiently large $M$. 
This implies that $s=\gcd(Ms,s)=\gcd(Ms+s',s)=\gcd(s',s)=s'$.

\end{proof}

 \begin{remark}
\label{rem:slightly}
 Given $m\ge 2$, and given $p\ge2$ and $s\ge1$ such that $s\mid p$, let $B$ be an $(m + 1) \times (p+m)$ integral matrix  defined by
$$
B = 
\begin{pmatrix}
    1 & \cdots & 0& 0 & 1 & 1 & 1 & \cdots  & 1 \\
    \vdots & \ddots & \vdots & \vdots & \vdots & \vdots&\vdots &  & \vdots \\
   0 & \cdots & 1 & 0 & 1 & 1 & 1 &\cdots  & 1 \\
      0 & \cdots & 0 & s & p & p & p & \cdots  & p \\
      0 & \cdots & 0 & 0 & 0 & 1 & 2 & \cdots  & p-1 
\end{pmatrix}.
$$
By the same method as in the proof of Theorem \ref{thm:main}, one can show that the non-central hyperplane arrangement $\scB$ in $\R^m$ defined by matrix $B$ has lcm period $p$ but has minimum period $s$. 
More explicitly, the following formula holds: 
\begin{equation*}
\label{eq:explicit-B} 
\begin{aligned}
\chi^{\quasi}_{\scB}(q) & = \sum_{k=0}^{m-1} (-1)^k \left[\left(p\binom{m-1}{k-2}+\binom{m-1}{k-1}\right)\gcd(q,s)+ p\binom{m-1}{k-1}+\binom{m-1}{k}  \right]q^{m-k} \\ 
& +p(-1)^m[1+(m-1)\gcd(q,s)],
\end{aligned}
\end{equation*}
which differs from $\chi^{\quasi}_{\A}(q)$ only at the constant coefficient: $\chi^{\quasi}_{\A}(q) - \chi^{\quasi}_{\scB}(q) = (-1)^m\gcd(q,s)$.
However, matrix $B$ has a slightly small drawback. 
We understand that the matrix $B$ when $m=1$ is given by
$$
B = 
\begin{pmatrix}
s & p & p &   \cdots  & p \\
0 & 0 & 1 &   \cdots  & p-1 
\end{pmatrix}.
$$
The corresponding characteristic quasi-polynomial has lcm period $p$  but has minimum period $1$:  $\chi^{\quasi}_{\scB}(q) = q-p$.
Also if $p=1$, regardless of the value of $m$, the last row of matrix $B$ is identical to the zero vector, hence $\scB$ is a central hyperplane arrangement. 

\end{remark}

\begin{remark}
\label{rem:more-general}
 Given $m\ge 1$, $a\ge 1$, $s\ge 1$, and  $p\ge1$, let $A'$ be an $(m + 1) \times (p+m)$ integral matrix  defined by
$$
A' = 
\begin{pmatrix}
    1 & \cdots & 0& 0 & 1 & 1  & \cdots  & 1 \\
    \vdots & \ddots & \vdots & \vdots  & \vdots&\vdots &  & \vdots \\
   0 & \cdots & 1 & 0 & 1 & 1  &\cdots  & 1 \\
      0 & \cdots & 0 & s & a & a  & \cdots  & a \\
      0 & \cdots & 0 & 0 & 1 & 2  & \cdots  & p 
\end{pmatrix}.
$$
By the same method as in the proof of Theorem \ref{thm:main}, one can show that $A'$ determines a non-central hyperplane arrangement $\A'$ in $\R^m$ whose characteristic quasi-polynomial $\chi^{\quasi}_{\A'}(q)$ has lcm period $\lcm(s,a)$ and  
\begin{equation*}
\label{eq:A-A'} 
\chi^{\quasi}_{\A}(q) - \chi^{\quasi}_{\A'}(q) = (-1)^m\left(p - \gcd(q, a) \cdot \#\{r \in [p]: \gcd(q, a) \mid r\}  \right).
\end{equation*}
In particular, if $a=p$ or $ \gcd(q, a)=1$ (e.g., $a=1$), then $\chi^{\quasi}_{\A}(q) = \chi^{\quasi}_{\A'}(q)$.
\end{remark}

 \section{Period collapse and Ehrhart theory}
\label{sec:Ehrhart} 

In this section, we give the second proof of Theorem \ref{thm:main} using techniques from the Ehrhart theory. 
First let us recall how the characteristic quasi-polynomial $\chi^{\quasi}_{\A}(q)$ (notation from \S \ref{sec:CQP}) and the Ehrhart theory of polytopes are related following, e.g., \cite[\S2.2]{KTT08}.

We use the notation $U_\R$ to denote an interval $U$ of $\R$, e.g., $[0,1]_\R$ denotes the unit interval in $\R$ and $[0,1]_\R^d$ denotes the $d$-dimensional unit cube in $\R^d$. 
Apply the bijection $(0,q]_\R\cap\Z \simeq \Z_q$, we can write
\begin{equation}
\label{eq:Ehrhart-def}
\begin{aligned}
\chi^{\quasi}_{\A}(q) 
& = \#\{z\in \Z_q^m: z[c_j]_q-[b_j]_q \in (\Z_q^{\times})^n\}\\
& = \# \left(\Z^m\cap (0,q]_\R^m  \setminus \bigcup_{1\le j \le n,\, k\in \Z}\{x \in  \R^m : xc_j=kq+b_j \} \right).
\end{aligned}
\end{equation}
In the central case $b=0$, we can further write
$$\chi^{\quasi}_{\A}(q)  = \# \left(\Z^m\cap \left(q \times \left((0,1]_\R^m  \setminus \bigcup_{1\le j \le n,\, k\in \Z}\{x \in  \R^m  : xc_j=k \} \right)\right)\right).$$
We see that the half-open unit cube $(0,1]_\R^m$ cut out by the hyperplanes $xc_j=kq+b_j$, $1\le j \le n,\, k\in \Z$ is a disjoint union of rational ``partially open" polytopes. 
Thus the Ehrhart theory can be applied to some faces of each polytope to show that $\chi^{\quasi}_{\A}(q)$ is a sum of the Ehrhart quasi-polynomials. 
In the non-central case $b\ne0$, however, it is not straightforward to derive the similar conclusion. 
The nicest setting we are able to do so  is when the arrangement $\A$ comes from root systems. In this case, the disjoint union above induces a partition of the cube into simplices the so-called Worpitzky partition, e.g., \cite{A04, Y18W}, and the Ehrhart quasi-polynomial of each partially open simplex can be described by using the (generalized) Eulerian statistics, e.g., \cite{LP18,  ATY20} (see \S\ref{sec:rs} for more details). 

Our approach in this paper is different and ``complement" to the approach above: instead of counting the lattice points in the simplices, we describe the lattice points in the intersections of $(0,1]_\R^m$  and the hyperplanes $xc_j=kq+b_j$. It turns out that in our setting these lattice points can be nicely placed in the unit cubes in lower dimensions after applying affine unimodular transformations.

If $P$ is a rational polytope or $P$ is the relative interior of a rational polytope in $\R^m$, we continue to use the notation ${\rm L}_{P}(q)=\#(qP \cap \Z^m)$ for the Ehrhart quasi-polynomial of $P$. 
We are now in position to state the main result in this section for which we will give an Ehrhart-theoretic proof. 
\begin{theorem} 
\label{thm:main-Ehrhart} 
Let $A$ be the matrix defined in the proof of Theorem \ref{thm:main} (\S\ref{sec:main}). The following holds: 
\begin{equation}
\label{eq:Ehrhart}
\chi^{\quasi}_\A(q)=(q-\gcd(q,s))\left({\rm L}_{(0,1)_\R^{m-1}}(q)+p\sum_{k=1}^{m-1}(-1)^k{\rm L}_{(0,1)_\R^{m-1-k}}(q)\right) + (-1)^m p. 
\end{equation}
\end{theorem}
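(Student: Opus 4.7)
The plan is to evaluate the lattice-point description \eqref{eq:Ehrhart-def} by a double inclusion-exclusion and, at each stage, recognize the resulting sums as Ehrhart quasi-polynomials of unit cubes of varying dimension. A lattice point $z \in \Z^m \cap (0,q]_\R^m$ lies in $\M(\A_q)$ if and only if (i) $z_j \ne q$ for $j=1,\ldots,m-1$, (ii) $sz_m \not\equiv 0 \pmod q$, and (iii) $z_1+\cdots+z_{m-1}+pz_m \not\equiv r \pmod q$ for each $r \in [p]$. I would first condition on $z_m$: there are exactly $q-\gcd(q,s)$ good choices satisfying (ii). For each such $z_m$, the $p$ affine constraints in (iii) are pairwise parallel and hence mutually incompatible, so inclusion-exclusion on them collapses to
\[
N(z_m)\;=\;{\rm L}_{(0,1)_\R^{m-1}}(q)\;-\;\sum_{r=1}^{p} N\bigl((r-pz_m)\bmod q,\,q\bigr),
\]
where $N(t,q) := \#\{(z_1,\ldots,z_{m-1}) \in (0,q)_\R^{m-1} \cap \Z^{m-1} : z_1+\cdots+z_{m-1} \equiv t \pmod q\}$.

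The second pass of inclusion-exclusion is on the boundary ``$z_j=q$'' inside the closed cube $(0,q]_\R^{m-1}$. Using that $[1,q]$ is a complete residue system modulo $q$, the uniform-fiber property of modular linear forms yields
\[
N(t,q)\;=\;\sum_{k=0}^{m-2}(-1)^k\binom{m-1}{k}q^{m-2-k}\;+\;(-1)^{m-1}\mathbf{1}[t \equiv 0 \pmod q].
\]
The polynomial part is independent of $t$, so summing it over $r=1,\ldots,p$ simply multiplies it by $p$, and a short binomial manipulation (equivalent to the hockey-stick identity) rewrites it as $-\sum_{k=1}^{m-1}(-1)^k(q-1)^{m-1-k}=-\sum_{k=1}^{m-1}(-1)^k {\rm L}_{(0,1)_\R^{m-1-k}}(q)$. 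Summing over the good values of $z_m$ therefore produces the first summand $(q-\gcd(q,s))\bigl({\rm L}_{(0,1)_\R^{m-1}}(q)+p\sum_{k=1}^{m-1}(-1)^k{\rm L}_{(0,1)_\R^{m-1-k}}(q)\bigr)$ of \eqref{eq:Ehrhart}.

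It remains to handle the indicator contribution. Summing $-(-1)^{m-1}\mathbf{1}[(r-pz_m) \equiv 0 \pmod q]$ over $r \in [p]$ and over good $z_m$, and absorbing the sign $-(-1)^{m-1}=(-1)^m$, one gets $(-1)^m$ times the cardinality of
\[
\bigl\{(z_m,r)\in[1,q]\times[1,p]\;:\;pz_m \equiv r \pmod q,\ sz_m \not\equiv 0 \pmod q\bigr\}.
\]
Forgetting the $s$-constraint, the count equals $p$: as $z_m$ ranges over $[1,q]$, the value $pz_m \bmod q$ visits each multiple of $\gcd(p,q)$ in $[0,q-1]$ exactly $\gcd(p,q)$ times, and there are $p/\gcd(p,q)$ such multiples in $[1,p]$. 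Since $s \mid p$, every $z_m$ with $sz_m \equiv 0 \pmod q$ already satisfies $pz_m \equiv 0 \pmod q$, hence $pz_m \bmod q = 0 \notin [1,p]$; the $s$-constraint is automatically respected and the count is exactly $p$, giving the additive $(-1)^m p$ in \eqref{eq:Ehrhart}.

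The only step requiring real thought, rather than bookkeeping, is this last ``coincidence count'': the exceptional indicator contribution must be the universal constant $p$ despite the potentially intricate interplay between $p$, $s$, and $q$, and the divisibility hypothesis $s \mid p$ (built into the construction of $A$ in \S\ref{sec:main}) is precisely what forces this collapse.
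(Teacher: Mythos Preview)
Your argument is correct and is genuinely different from the paper's. The paper proceeds geometrically: it first treats the auxiliary matrix $D$ with $s=1$ (Lemma~\ref{lem:s=1}), proving via $\GL_m(\Z)$-equidecomposability that the hyperplane slices $\scC^m_{\ell,r}$ inside the open cube can be translated and reassembled into a lower-dimensional open cube minus certain boundary pieces $\scR_{\ell,r}$ (Claim~\ref{cl:umuc}), and then iterates this dimension-drop; the main theorem follows by peeling off the slices $x_m=kq/\gcd(q,s)$ and invoking Lemma~\ref{lem:s=1} twice (once with $a=p$, once with $a=1$). You instead compute everything directly: after conditioning on $z_m$, the formula for $N(t,q)$ via inclusion--exclusion on the facets $z_j=q$ of the closed cube replaces the entire equidecomposability argument, and your closed form for $N(t,q)$ is exactly what the paper obtains only after the recursive application of Eq.~\eqref{eq:induction}. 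The binomial identity you invoke is the paper's Eq.~\eqref{eq:easy} (it is not really the hockey-stick identity, but that is a naming quibble). Your treatment of the indicator term, showing the coincidence count is exactly $p$ because $s\mid p$ forces any bad $z_m$ to have $pz_m\equiv 0\not\in[1,p]$, is cleaner than the paper's, which splits into the two cases $a=1$ and $a=p$ at the end of Lemma~\ref{lem:s=1}. What the paper's route buys is an explicitly Ehrhart-theoretic and geometric proof in keeping with the section's theme, plus the standalone Lemma~\ref{lem:s=1} for the family $D$; what your route buys is brevity and transparency, bypassing the equidecomposability machinery entirely. One minor omission: you should state that the argument is run for $q>p$ (so that the $p$ residue classes in (iii) are distinct modulo $q$ and so that $0\notin[1,p]\bmod q$), which is harmless since only the quasi-polynomial identity is at stake.
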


Before proving Theorem \ref{thm:main-Ehrhart}, let us mention that  Eq. \eqref{eq:explicit} and Eq. \eqref{eq:Ehrhart} are essentially equivalent.
\begin{proposition} 
\label{prop:another} 
RHS \eqref{eq:explicit} $=$ RHS \eqref{eq:Ehrhart}.
\end{proposition}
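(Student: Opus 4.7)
The plan is to recognize that both sides of the claimed equality are polynomials in $q$ whose coefficients are $\Z[p]$-linear combinations of $1$ and $g:=\gcd(q,s)$, and then to verify the equality by matching the $g^0$ and $g^1$ parts separately. Since $\mathrm{L}_{(0,1)_{\R}^d}(q)=(q-1)^d$ (the number of integer points strictly inside $[0,q]^d$), I would substitute into RHS \eqref{eq:Ehrhart} to rewrite it as $A(q)+g\,B(q)$, where $A(q):=q\,T(q)+(-1)^m p$, $B(q):=-T(q)$, and
\[
T(q) := (q-1)^{m-1}+p\sum_{k=1}^{m-1}(-1)^k(q-1)^{m-1-k}.
\]
The RHS of \eqref{eq:explicit} visibly has the same shape, so the proposition reduces to two polynomial identities in $q$, one for each power of $g$.

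The $g^1$ identity is
\[
\sum_{k=0}^{m}(-1)^k\!\left[p\binom{m-1}{k-2}+\binom{m-1}{k-1}\right]q^{m-k} = -T(q).
\]
I would reindex the two sums on the left via $j=k-1$ and $j=k-2$. The $\binom{m-1}{k-1}$ piece collapses to $-(q-1)^{m-1}$ by the binomial theorem. The $\binom{m-1}{k-2}$ piece becomes $p\sum_{j=0}^{m-2}(-1)^j\binom{m-1}{j}q^{m-2-j}$, which multiplied by $q$ equals $p\bigl[(q-1)^{m-1}+(-1)^m\bigr]$, hence the sum itself equals $p\bigl[(q-1)^{m-1}+(-1)^m\bigr]/q$ (a genuine polynomial because the numerator vanishes at $q=0$). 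On the right, the geometric-series identity $\sum_{k=0}^{m-1}(-1)^k(q-1)^{m-1-k}=\bigl[(q-1)^m-(-1)^m\bigr]/q$, applied after subtracting the $k=0$ term, converts $-T(q)$ into precisely the same polynomial.

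For the $g^0$ identity, the analogous reindexing reduces the RHS of \eqref{eq:explicit} to $(q-p)(q-1)^{m-1}$, and a parallel geometric-series manipulation applied to $A(q)$ gives
\[
q\,T(q)+(-1)^m p \;=\; q(q-1)^{m-1}+p(q-1)^m-pq(q-1)^{m-1} \;=\; (q-p)(q-1)^{m-1},
\]
the stray $(-1)^m p$ being exactly what cancels the boundary term $-p(-1)^m$ produced by the geometric sum. The main obstacle is purely bookkeeping: one must track the two shifted binomial sums $\binom{m-1}{k-1}$ and $\binom{m-1}{k-2}$, treat the boundary indices $k=0,1,m$ correctly so that $\binom{m-1}{-1}$ and $\binom{m-1}{m}$ vanish and $\binom{m-1}{m-1}$ contributes the right $(-1)^m$, and recognize that, once these shifts are absorbed, both sides of each identity are the same disguised geometric series in $(q-1)$.
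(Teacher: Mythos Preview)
Your proposal is correct and is essentially the same approach as the paper's own proof: both substitute $\mathrm{L}_{(0,1)_\R^{d}}(q)=(q-1)^d$, separate into the $g$-part and the constant part, and reduce everything to the binomial/geometric identity $\sum_{k=0}^{d}(-1)^k\binom{d}{k-1}q^{d-k}=\sum_{k=1}^{d}(-1)^k(q-1)^{d-k}$. The only organizational difference is that the paper notices the $g^1$ coefficient is (after the shift $k\mapsto k-1$) the negative of the bracketed factor obtained by pulling a $q$ out of the $g^0$ part, so it produces the factor $(q-g)$ in one step instead of verifying the two halves independently as you do.
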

\begin{proof}
First observe that for any $d \geq 1$, 
\begin{align}\label{eq:easy}
\sum_{k=0}^d(-1)^k\binom{d}{k-1}q^{d-k}=\sum_{k=1}^d(-1)^k(q-1)^{d-k}.
\end{align}
Denote $g:=\gcd(q,s)$.
RHS \eqref{eq:explicit} is given by
\begin{align*}
& \sum_{k=0}^{m} (-1)^k g\left(p\binom{m-1}{k-2}+\binom{m-1}{k-1}\right)q^{m-k} + \sum_{k=0}^m (-1)^k\left(p\binom{m-1}{k-1}+\binom{m-1}{k}\right)q^{m-k} \\
=&-g\sum_{k=1}^m (-1)^{k-1} \left(p\binom{m-1}{k-2}+\binom{m-1}{k-1}\right)q^{m-k} \\
&\quad\quad\quad + q\sum_{k=0}^{m-1} (-1)^k\left(p\binom{m-1}{k-1}+\binom{m-1}{k}\right)q^{m-1-k} + (-1)^mp \\
=&(q-g)\left(\sum_{k=0}^{m-1} (-1)^k \left(\binom{m-1}{k}+p\binom{m-1}{k-1}\right)q^{m-1-k}\right) + (-1)^mp.
\end{align*}
Note that  ${\rm L}_{(0,1)_\R^d}(q)=(q-1)^d$ for $d \ge 1$ and we agree that it equals $1$ when $d=0$. 
Apply Eq. \eqref{eq:easy}.
\end{proof}

Let $\GL_m(\Z)$ denote the set of all $m\times m$ \emph{unimodular} matrices (integral matrices having determinant $+1$ or $-1$).
Now let us recall the notion of $\GL_m(\Z)$-equidecomposability following, e.g., \cite[Definition 3.1]{HM08}.
Let ${\rm Aff}_m(\Z):= \GL_m(\Z)\ltimes\Z^m$ be the group of affine unimodular transformations on $\R^m$.
Two (open) polytopes $P, Q \subseteq\R^m$ are said to be \emph{unimodularly equivalent} if there exists $\sigma \in {\rm Aff}_n(\Z)$ such that $\sigma(P) = Q$.
Two sets $P, Q \subseteq\R^m$ are said to be \emph{$\GL_m(\Z)$-equidecomposable} if there are open polytopes $T_1,\ldots, T_n$ and affine unimodular transformations $\sigma_1,\ldots,\sigma_n \in {\rm Aff}_m(\Z)$ such that
$$ P = \bigsqcup_{i=1}^n T_i \quad \mbox{and}\quad Q= \bigsqcup_{i=1}^n \sigma_i(T_i).$$
Here, $ \bigsqcup$ indicates disjoint union. 
It is known that if $P, Q \subseteq\R^m$ are $\GL_m(\Z)$-equidecomposable, then $\#(qP \cap \Z^m)=\#(qQ \cap \Z^m)$ for all $q\in\Z_{>0}$.

Given $m\ge 1$, $a\ge 1$, and  $p\ge1$, let $D$ be an $(m + 1) \times (p+m)$ integral matrix  defined by
$$
D = 
\begin{pmatrix}
    1 & \cdots & 0& 0 & 1 & 1  & \cdots  & 1 \\
    \vdots & \ddots & \vdots & \vdots  & \vdots&\vdots &  & \vdots \\
   0 & \cdots & 1 & 0 & 1 & 1  &\cdots  & 1 \\
      0 & \cdots & 0 & 1 & a & a  & \cdots  & a \\
      0 & \cdots & 0 & 0 & 1 & 2  & \cdots  & p 
\end{pmatrix}.
$$
In other words, $D$ is a specialization of the matrix $A'$ in Remark \ref{rem:more-general} with $s=1$. 
Let $\D$ be the non-central  arrangement defined by matrix $D$.
In particular, $\chi^{\quasi}_\D(q)$ has lcm period $1$ hence $\chi^{\quasi}_\D(q)$ is a polynomial.
The  following formula is crucial for Theorem \ref{thm:main-Ehrhart} whose proof will be derived by the method of $\GL_m(\Z)$-equidecomposability.
\begin{lemma} 
\label{lem:s=1}
If $q>p$, then
\begin{equation}
\label{eq:s=1}
\begin{aligned}
\chi^{\quasi}_\D(q) & ={\rm L}_{(0,1)_\R^{m}}(q) + p\sum_{k=1}^{m-1}(-1)^k{\rm L}_{(0,1)_\R^{m-k}}(q) \\
&+ (-1)^m\#\left(\bigsqcup_{r=1}^p\bigsqcup_{\ell=1}^a\left\{x \in \R^2 : \begin{array}{c} x_1 =\ell q, 0< x_2< q, \\ x_1+ ax_2=a q+r\end{array} \right\}\cap \Z^2\right).
\end{aligned} 
\end{equation}
In particular, if either (i) $a=1$, or (ii) $q\ge 2p$ and $a=p$, then 
\begin{equation}
\label{eq:s=1-inp}
 \chi^{\quasi}_\D(q)={\rm L}_{(0,1)_\R^{m}}(q) + p\sum_{k=1}^{m}(-1)^k{\rm L}_{(0,1)_\R^{m-k}}(q).
 \end{equation}

\end{lemma}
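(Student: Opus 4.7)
The plan is to prove Lemma~\ref{lem:s=1} via a two-stage inclusion--exclusion, followed by an explicit affine-unimodular bijection that identifies the residual boundary count with $T$.

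First, I would apply formula \eqref{eq:Ehrhart-def} to $\D$. The arrangement splits into the $m$ coordinate hyperplanes $x_j = 0$ and $p$ parallel ``diagonal'' hyperplanes $f(x) := x_1 + \cdots + x_{m-1} + a x_m = r$ with $r \in [p]$. The coordinate conditions restrict us to the open cube $(0,q)_\R^m$ with $(q-1)^m$ lattice points, and the hypothesis $q > p$ ensures the diagonals remain pairwise disjoint modulo $q$, so the first inclusion--exclusion collapses to
\[
\chi^{\quasi}_{\D}(q) \;=\; (q-1)^m - \sum_{r=1}^{p} |A_r|, \qquad A_r := \{z \in (0,q)_\R^m \cap \Z^m : f(z) \equiv r \pmod q\}.
\]

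Next, I would slice each $A_r$ by $z_m \in \{1,\dots,q-1\}$, reducing to the count of $(z_1,\dots,z_{m-1}) \in (0,q)_\R^{m-1} \cap \Z^{m-1}$ with $\sum_{j<m} z_j \equiv r - a z_m \pmod q$. A second inclusion--exclusion on the $m-1$ off-boundary conditions $z_j \ne 0$, together with the elementary fact $\#\{(z_1,\dots,z_{m-1}) \in \{0,\dots,q-1\}^{m-1} : \sum z_j \equiv \beta \pmod q\} = q^{m-2}$, yields a uniform contribution $\frac{(q-1)^{m-1}-(-1)^{m-1}}{q}$ plus a single ``boundary'' correction $(-1)^{m-1}$ exactly when $\beta \equiv 0 \pmod q$. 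Summing over $z_m$ and $r$ and clearing the $1/q$ denominator via the factorization
\[
(q-1)^m + (-1)^m(q-1) \;=\; q\sum_{k=1}^{m-1}(-1)^{m-1-k}(q-1)^k
\]
produces the polynomial part of \eqref{eq:s=1} and leaves a residual $(-1)^m M$, where
\[
M := \#\{z_m \in \{1,\dots,q-1\} : a z_m \bmod q \in [p]\}.
\]

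Finally, I would identify $M$ with $T$ through the affine-unimodular bijection $z_m \leftrightarrow (\ell, r)$ defined by $a z_m = (a-\ell)q + r$, with $\ell \in [a]$ and $r \in [p]$; this trades the divisibility condition ``$a z_m \bmod q \in [p]$'' for ``$r \equiv \ell q \pmod a$,'' and the automatic inequalities $0 < z_m < q$ reproduce exactly the constraints $0 < x_2 < q$ in the definition of $T$ (with $x_1 = \ell q$, $x_2 = z_m$). The special cases in \eqref{eq:s=1-inp} then follow by a direct tally: for $a = 1$ the congruence $r \equiv \ell q \pmod a$ is vacuous and only $\ell = 1$ is admissible, giving $T = p$; for $a = p$ with $q \ge 2p$, an analogous counting likewise yields $T = p$, which reproduces the $(-1)^m p$ term of \eqref{eq:s=1-inp}. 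The main obstacle will be the careful sign bookkeeping and the polynomial identity that clears the $1/q$ denominator in the second inclusion--exclusion; by contrast, the bijection $M \leftrightarrow T$ is essentially forced once $M$ is written out explicitly, and it is what explains why the residual correction in \eqref{eq:s=1} takes precisely the two-dimensional form appearing in the statement.
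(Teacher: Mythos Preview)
Your argument is correct and takes a genuinely different route from the paper's. The paper proves Eq.~\eqref{eq:s=1} geometrically: for each $r$ it gathers the slices $\scC_{\ell,r}^m = H_{\ell,r}^m\cap(0,q)_\R^m$, applies an explicit affine unimodular translation to each, and shows that their union is $\GL_m(\Z)$-equidecomposable with a set $\scQ\setminus\bigsqcup_\ell\scR_\ell$ where $\scQ$ is unimodularly equivalent to $(0,q)_\R^{m-1}$ (Claim~\ref{cl:umuc}). This yields the recursion $\#\bigl(\bigsqcup_\ell \scC_{\ell,r}^m\cap\Z^m\bigr)={\rm L}_{(0,1)_\R^{m-1}}(q)-\#\bigl(\bigsqcup_\ell \scC_{\ell,r}^{m-1}\cap\Z^{m-1}\bigr)$, and iterating down to dimension $2$ leaves exactly the residual $\scR_{\ell,r}^1$ terms that constitute the two-dimensional count $T$. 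Your approach bypasses the equidecomposability machinery entirely: you slice once along $z_m$, reduce to the uniform-distribution fact $\#\{z\in\{0,\dots,q-1\}^{m-1}:\sum z_j\equiv\beta\}=q^{m-2}$, and then clear the resulting $1/q$ via the telescoping identity $(q-1)^m+(-1)^m(q-1)=q\sum_{k=1}^{m-1}(-1)^{m-1-k}(q-1)^k$. This is more elementary and shorter, but it is purely algebraic and does not exhibit the Ehrhart-theoretic structure (translations of rational polytopes) that the paper is trying to highlight in this section. Your bijection $M\leftrightarrow T$ via $a z_m=(a-\ell)q+r$ is exactly right; one small point worth making explicit in a full write-up is that the range $\ell\in[a]$ on the $T$ side is \emph{forced} by $z_m\in[q-1]$ (and conversely the bounds $0<x_2<q$ are automatic once $\ell\in[a]$, $r\in[p]$, $q>p$), so that integrality is the only surviving constraint on both sides.
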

\begin{proof}
By Eq. \eqref{eq:Ehrhart-def},  $\chi^{\quasi}_\D(q)$ is given by
\begin{equation}\label{eq:count}
 \chi^{\quasi}_\D(q) =
 \#\{x\in [q-1]^m : x_1+\cdots+x_{m-1}+ax_m \neq \ell q+r, \ell \in \Z, r\in[p]\}. 
\end{equation}
Note that we may assume $0 \leq \ell \leq a+m-2$ since $0< x_i< q$ for each $i$. 
Given $\ell,r \in \Z$, let $H_{\ell,r}^m:=\{x \in \R^m : x_1+\cdots+x_{m-1}+ax_m=\ell q+r\}$, and let $\scC_{\ell,r}^m:=H_{\ell,r}^m \cap (0,q)_\R^m$. 
Note that $\scC_{\ell,r}^m \cap \scC_{\ell',r}^m=\emptyset$ for any $r$ if $\ell\ne\ell'$, and $\scC_{\ell,r}^m \cap \scC_{\ell',r'}^m=\emptyset$ for any $\ell,\ell'$ if $r \neq r'$ since $q>p$. 
\begin{claim} 
\label{cl:umuc} 
For any $ r\in[p]$, if $q>p$ then
 \begin{align*}\label{eq:cube}
\bigsqcup_{\ell=0}^{a+m-2} \scC_{\ell,r}^m \,\mbox{ and }\,  \scQ\setminus \bigsqcup_{\ell=1}^{a+m-2}\scR_\ell \,\mbox{ are $\GL_m(\Z)$-equidecomposable},
\end{align*}
where 
\begin{align*}
\scQ=\scQ^{m}_r &:=\{x \in \R^m : 0 < x_1 < (a+m-1)q, \;0 < x_i < q, i \in [2,m]\} \cap H_{a+m-2,r}^m, \\
\scR_\ell=\scR_{\ell,r}^{m-1} &:= \{x \in \R^m : x_1 =\ell q, 0< x_i < q, i \in [2,m]\} \cap H_{a+m-2,r}^m.
\end{align*}
 Moreover, $\scQ$ is unimodularly equivalent to $(0,q)_\R^{m-1}$.
\end{claim}
\begin{proof}
By applying the translation $\tau:\R^m \to \R^m$ via $x \mapsto x-((a+m-2-\ell)q,0,\ldots,0)^T$, $\scC_{\ell,r}^m$ is unimodularly equivalent to
\begin{align*}
\{x \in \R^m : (a+m-2-\ell)q < x_1 < (a+m-1-\ell)q, 0< x_i < q, i \in [2,m]\} \cap H_{a+m-2,r}^m. 
\end{align*}
Obviously, $\scR_{\ell,r}^{m-1} \cap \scR_{\ell',r}^{m-1} =\emptyset$ for any $r$ if $\ell\ne\ell'$, and $\scR_{\ell,r}^{m-1}  \cap \scR_{\ell',r'}^{m-1} =\emptyset$ for any $\ell,\ell'$ if $r \neq r'$.
Hence $\bigsqcup_{\ell=0}^{a+m-2}\scC_{\ell,r}^m$ and $\scQ\setminus  \bigsqcup_{\ell=1}^{a+m-2}\scR_\ell$ are $\GL_m(\Z)$-equidecomposable, as desired. 

To prove the second statement, it suffices to prove that $[0,q]_\R^{m-1}$ is unimodularly equivalent to the closure $\overline{\scQ}$ of $\scQ$, 
$$\overline{\scQ}=\{x \in \R^m : 0 \le x_1 \le(a+m-1)q, \;0 \le x_i \le q, i \in [2,m]\} \cap H_{a+m-2,r}^m.
$$
The closure $\overline{\scQ}$ is a polytope whose  vertices are given by
\begin{align*}
((a+m-2-i)q+r,q^{(I)},0)^T ,((m-2-i)q+r,q^{(I)},q)^T,
\end{align*}
where $I \subseteq [2,m-1],$ $i=\#I$, and $q^{(I)}\in  \Z^{m-2}$ has the $j$-th entry equal to $q$ for $j \in I$ and $0$ otherwise. 
Note that $((a+m-2-i)q+r,q^{(I)},0)^T $ (resp. $((m-2-i)q+r,q^{(I)},q)^T$) comes from the intersection of 
the hyperplanes $x_m=0$ (resp. $x_m=q$), $x_j=q$ for $j \in I$, $x_{j'}=0$ for $j' \in [2,m-1]  \setminus I$ and $H_{a+m-2,r}^m$ for $I \subseteq [2,m-1]$. 
(Remark that $x_1=0$ does not intersect with $H_{a+m-2,r}^m$ within $\{x \in \R^m : 0 \leq x_1 \leq (a+m-1)q, \; 0 \leq x_i \leq q ,i \in [2,m]\}$, neither does $x_1=(a+m-1)q$ since $q>p$.)
One can show that the unimodular transformation $\sigma:\R^m \to \R^m$ via $x \mapsto 
\begin{pmatrix}
    1 & \cdots & 1& a  \\
      & \ddots &   &  \\
    &  & 1 &  \\
       & &  & 1 \\
       \end{pmatrix}x-(aq+r,0,\ldots,0)^T$ sends the polytope $\overline{\scQ}$ to a polytope whose vertices are given by
$$
((m-2)q,q^{(I)},0)^T,((m-2)q,q^{(I)},q)^T, \,\mbox{ where }\, I \subseteq [2,m-1].
$$
Clearly, this polytope is unimodularly equivalent to $[0,q]_\R^{m-1}$ which proves the claim.
\end{proof}

Use Eq. \eqref{eq:Ehrhart-def} again to write Eq. \eqref{eq:count} as
$$
 \chi^{\quasi}_\D(q) = {\rm L}_{(0,1)_\R^m}(q) -  \#\left( \bigsqcup_{r=1}^p \bigsqcup_{\ell=0}^{a+m-2}(\scC_{\ell,r}^m \cap \Z^m)  \right).
 $$
 Let us fix  $ r\in[p]$. 
By Claim \ref{cl:umuc},  we have
\begin{equation}
\label{eq:induction-r}
 \#\left(\bigsqcup_{\ell=0}^{a+m-2} (\scC_{\ell,r}^m \cap \Z^m)  \right) 
={\rm L}_{(0,1)_\R^{m-1}} (q)-
 \#\left(\bigsqcup_{\ell=1}^{a+m-2}( \scR_{\ell,r}^{m-1} \cap \Z^m)  \right). 
\end{equation}
It is not hard to see that 
$
\scR_{\ell,r}^{m-1}=H_{a+m-2-\ell,r}^{m-1} \cap (0,q)_\R^{m-1}.
$
Thus
\[ \#\left(\ \bigsqcup_{\ell=1}^{a+m-2}(\scR_{\ell,r}^{m-1}\cap \Z^m)  \right)=\#\left( \bigsqcup_{\ell=0}^{a+m-3}(H_{\ell,r}^{m-1} \cap (0,q)_\R^{m-1}\cap  \Z^{m-1}) \right).\]
Hence
\begin{align}\label{eq:induction}
 \#\left( \bigsqcup_{\ell=0}^{a+m-2} (\scC_{\ell,r}^m \cap \Z^m)  \right) 
={\rm L}_{(0,1)_\R^{m-1}} (q)-
 \#\left( \bigsqcup_{\ell=0}^{a+m-3} (\scC_{\ell,r}^{m-1}  \cap \Z^{m-1})  \right). 
\end{align}
Apply Eq. \eqref{eq:induction} repeatedly to have 
\begin{align*}
 \chi^{\quasi}_\D(q) = &{\rm L}_{(0,1)_\R^m}(q) -  \#\left( \bigsqcup_{r=1}^p  \bigsqcup_{\ell=0}^{a+m-2}(\scC_{\ell,r}^m \cap \Z^m)  \right)\\
= &{\rm L}_{(0,1)_\R^m}(q) - p \left( {\rm L}_{(0,1)_\R^{m-1}}(q)- {\rm L}_{(0,1)_\R^{m-2}}(q)+\cdots+(-1)^{m-3}{\rm L}_{(0,1)_\R^2}(q)\right) \\
+&(-1)^{m-1}\left(p{\rm L}_{(0,1)_\R^1}(q)- \#\left(\bigsqcup_{r=1}^p \bigsqcup_{\ell=1}^a(\scR_{\ell,r}^{1}\cap \Z^2)\right) \right). 
\end{align*}
(Note that we have applied Eq. \eqref{eq:induction-r} in the very last step.)
This completes the proof of Eq. \eqref{eq:s=1}.

It is easily seen that 
$$
 \bigsqcup_{\ell=1}^a(\scR_{\ell,r}^{1}\cap \Z^2) = \left\{ \ell \in [a] : q-\frac{\ell q-r}{a} \in [q-1]\right\}.
$$
If $a=1$, then obviously, 
\begin{align*}
c:= \#\left(\bigsqcup_{r=1}^p\left\{ \ell \in [a]: q-\frac{\ell q-r}{a} \in [q-1]\right\}  \right)
&=  \# \left\{r\in [p] : r \in [q-1] \right\}   = p. 
\end{align*}
If $q\ge 2p$ and $a=p$,  then 
\begin{align*}
c=  \#\left(\bigsqcup_{r=1}^p\left\{ \ell \in [p] :  q-\frac{\ell q-r}{p} \in \Z\right\}  \right) 
=  \#\left(\bigsqcup_{r=1}^p\left\{  \ell \in [p] : \ell q \equiv r \bmod{p}\right\}  \right) = p. 
\end{align*}
This completes the proof of Eq. \eqref{eq:s=1-inp}.
\end{proof}

\begin{example}
\label{ex:PC-q=4}  
We give an illustration of Lemma \ref{lem:s=1} in a particular case when $m=p=a=2$ and $q=4$ (see Figure \ref{fig:PC}). 
By Eq. \eqref{eq:count},  $\chi^{\quasi}_\D(q)$ is given by the number of lattice points in the open cube $(0,4)^2_\R$ but not in any hyperplane $H_{\ell,r} : x+2y=4\ell+r$ for $\ell=0,1 ,2$, $r=1,2$. 
It is easy to see that there are such $5$ lattice points highlighted in  Figure \ref{fig:PC}. 
To count the lattice points in  $\scC_{\ell,r} =H_{\ell,r} \cap (0,4)^2_\R$, following Claim \ref{cl:umuc}, we translate these open segments by integral vectors to form the sets $\scQ_1  \setminus (\scR_{1,1}\cup \scR_{2,1})$, $\scQ_2  \setminus (\scR_{1,2} \cup\scR_{2,2})$, where 
\begin{align*}
\scQ_1 & =  \mathrm{conv} \{(1,4)^T, (9,0)^T\} \setminus \{(1,4)^T, (9,0)^T\}, \\
\scQ_2 & =  \mathrm{conv} \{(2,4)^T, (10,0)^T\} \setminus \{(2,4)^T, (10,0)^T\}, 
\end{align*}
and $\scR_{1,1} =\{ (4, 5/2)^T\}$,  $\scR_{2,1} =\{  (8, 1/2)^T\}$,  $\scR_{1,2} =\{  (4, 3)^T\}$,  $\scR_{2,2} =\{ (8, 1)^T\}$. Note that both $\scQ_1$ and $\scQ_2$ are  unimodularly equivalent to $(0,4)_\R =   \mathrm{conv} \{(0,4)^T, (0,0)^T\}  \setminus \{(0,4)^T, (0,0)^T\} $ under the unimodular transformations $x \mapsto 
\begin{pmatrix}
    1 &2  \\
0& 1 \\
       \end{pmatrix}x-(9,0)^T$, $x \mapsto 
\begin{pmatrix}
    1 &2  \\
0& 1 \\
       \end{pmatrix}x-(10,0)^T$, respectively. 
Moreover, among the sets $\scR_{\ell,r}$'s, only $\scR_{1,2}$ and $\scR_{2,2}$ contain integral vectors. Thus,
$${\rm L}_{(0,1)_\R^2}(4) -2{\rm L}_{(0,1)_\R}(4) +\#\left(\bigsqcup_{r=1}^2 \bigsqcup_{\ell=1}^2(\scR_{\ell,r}\cap \Z^2)\right)  = 9 -2\cdot3+2=5,$$
which is compatible with the calculation above.
\begin{figure}[!ht]
\centering
    \includegraphics[width=14cm,height=6cm]{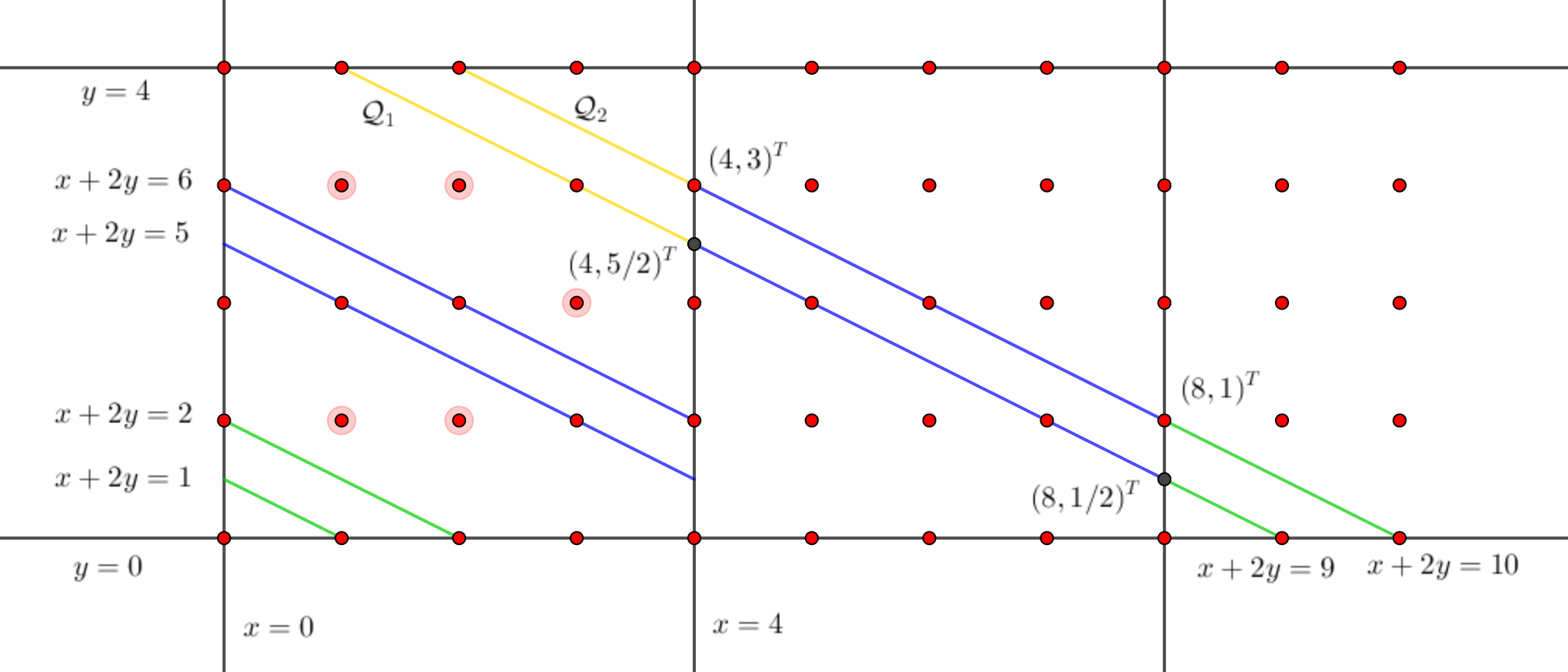}
    \caption{An illustration of Lemma \ref{lem:s=1} when $m=p=a=2$ and $q=4$.}
    \label{fig:PC}
\end{figure}
\end{example}

We are now ready to  prove Theorem \ref{thm:main-Ehrhart}.
\begin{proof}[\textbf{Proof of  Theorem \ref{thm:main-Ehrhart}}]
By Eq. \eqref{eq:Ehrhart-def},  $\chi^{\quasi}_\A(q)$ is given by 
\begin{align*}
\#\{x \in [q-1]^m :   x_m \neq kq/g, k \in [g-1], x_1+\cdots+x_{m-1}+px_m \neq \ell q+r, \ell \in \Z, r\in[p]\}, 
\end{align*}
where $g=\gcd(q,s)$. 
Suppose $q\ge 2p$. 
Thus if we set $a=p$ in Eq. \eqref{eq:count}, then
\begin{equation}\label{eq:Nk}
\chi^{\quasi}_\A(q) = RHS \,\eqref{eq:count} -  \bigsqcup_{k=1}^{g-1}\scN_k,
\end{equation}
where 
 \begin{align*}
\scN_k:= & \, \# \{x \in [q-1]^m :   x_m= kq/g,  x_1+\cdots+x_{m-1}+px_m \neq \ell q+r, \ell \in \Z, r\in[p]\}   \\
= & \,\#  \{x \in [q-1]^{m-1}:   x_1+\cdots+x_{m-1}  \neq \ell q+r, \ell \in \Z, r\in[p]\}  . 
\end{align*}
The formula of $\scN_k$ above is essentially Eq. \eqref{eq:count} where $a$ and $m$ are replaced by $1$ and $m-1$, respectively. 
Thus by  Eq. \eqref{eq:s=1-inp}, for each $k \in [g-1]$,
\begin{equation}\label{eq:each-Nk}
\scN_k=
{\rm L}_{(0,1)_\R^{m-1}}(q) + p\sum_{k=1}^{m-1}(-1)^k{\rm L}_{(0,1)_\R^{m-1-k}}(q).
\end{equation}

Combining Eq. \eqref{eq:Nk} and  Eq. \eqref{eq:each-Nk}, we have
$$
\chi^{\quasi}_\A(q) ={\rm L}_{(0,1)_\R^{m}}(q) + p\sum_{k=1}^{m}(-1)^k{\rm L}_{(0,1)_\R^{m-k}}(q)-  (g-1)\left({\rm L}_{(0,1)_\R^{m-1}}(q) + p\sum_{k=1}^{m-1}(-1)^k{\rm L}_{(0,1)_\R^{m-1-k}}(q)\right),
$$
which agrees with RHS \eqref{eq:Ehrhart}, as desired.
 \end{proof}

An immediate consequence of Theorem \ref{thm:main-Ehrhart} is the following  reciprocity property.
\begin{corollary} 
\label{thm:cor-main-Ehrhart} 
The following  holds
\begin{equation*}
\label{eq:cor-Ehrhart}
(-1)^m\chi^{\quasi}_\A(-q)=(q+\gcd(q,s))\left({\rm L}_{[0,1]_\R^{m-1}}(q)+p\sum_{k=1}^{m-1}{\rm L}_{[0,1]_\R^{m-1-k}}(q)\right) + p. 
\end{equation*}
\end{corollary}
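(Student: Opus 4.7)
The plan is to obtain the corollary as a direct consequence of Theorem \ref{thm:main-Ehrhart} by applying Ehrhart--Macdonald reciprocity to each Ehrhart quasi-polynomial appearing on the right-hand side. Recall that for a rational polytope $P\subseteq\R^d$ of dimension $d$ with relative interior $P^\circ$ and closure $\bar P$, one has
\[
{\rm L}_{P^\circ}(-q)=(-1)^d{\rm L}_{\bar P}(q).
\]
In particular, since the open unit cube $(0,1)_\R^{d}$ is the relative interior of the closed unit cube $[0,1]_\R^{d}$, we have ${\rm L}_{(0,1)_\R^{d}}(-q)=(-1)^{d}{\rm L}_{[0,1]_\R^{d}}(q)$ for every $d\ge 0$ (with the convention that both sides equal $1$ when $d=0$).

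The first step is to substitute $-q$ for $q$ in the formula of Theorem \ref{thm:main-Ehrhart}. Since $\gcd(\cdot,s)$ is taken to be a positive integer depending only on the residue class, $\gcd(-q,s)=\gcd(q,s)$, so the prefactor $(q-\gcd(q,s))$ becomes $-(q+\gcd(q,s))$. Next, I would apply the reciprocity identity above to rewrite
\[
{\rm L}_{(0,1)_\R^{m-1}}(-q)=(-1)^{m-1}{\rm L}_{[0,1]_\R^{m-1}}(q)\quad\text{and}\quad {\rm L}_{(0,1)_\R^{m-1-k}}(-q)=(-1)^{m-1-k}{\rm L}_{[0,1]_\R^{m-1-k}}(q).
\]
The signs inside the sum combine as $(-1)^{k}(-1)^{m-1-k}=(-1)^{m-1}$, so the entire bracket collects a common factor of $(-1)^{m-1}$.

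Combining the factor $-1$ from the prefactor with $(-1)^{m-1}$ yields $(-1)^{m}$ in front of the bracket; the trailing constant $(-1)^m p$ is unaffected. Multiplying both sides by $(-1)^m$ then gives exactly the claimed identity, and the proof is complete. The only substantive ingredient beyond bookkeeping is Ehrhart--Macdonald reciprocity, which applies uniformly to each cube of any dimension, so there is no real obstacle; the computation is a short sign-tracking exercise.
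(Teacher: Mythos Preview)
Your proof is correct and is exactly what the paper intends: it states the corollary as ``an immediate consequence of Theorem \ref{thm:main-Ehrhart}'' without giving any further argument, and your substitution $q\mapsto -q$ together with Ehrhart--Macdonald reciprocity (equivalently, the elementary identity $(-q-1)^d=(-1)^d(q+1)^d$) is precisely that immediate deduction.
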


 \section{Period collapse from root systems}
\label{sec:rs} 
Let   $V$ be an $m$-dimensional Euclidean vector space with the standard inner product $(\cdot,\cdot)$.
 Let $\Phi$ be an irreducible (crystallographic) root system in $V$, with a fixed positive system $\Phi^+ \subseteq \Phi$ and the associated set of simple roots $\Delta := \{\alpha_1,\ldots,\alpha_m \}$. 
  The highest root $\tilde{\alpha} \in \Phi^+$ can be written uniquely as a linear combination of the simple roots $\tilde{\alpha}= \sum_{i=1}^m c_i\alpha_i$  with all $c_i \in  \Z_{>0}$. 
Let $\{\varpi^\vee_1, \ldots ,\varpi^\vee_m\}$ be the dual basis of $\Delta$, namely, $(\alpha_i ,\varpi^\vee_j) = \delta_{ij}$. 
The \emph{coweight lattice} of $\Phi$ is defined by $Z(\Phi^\vee):=\bigoplus_{i=1}^m \Z\varpi^\vee_i$.  
The fundamental domain $P^\diamondsuit = \sum_{i=1}^m (0,1]_\R \varpi^\vee_i$ of the coweight lattice is called the \emph{fundamental parallelepiped}.

For $k \in \Z$ and $\alpha \in  \Phi$, the \emph{affine hyperplane} $\widetilde{H}_{\alpha,k} $ is defined by
$\widetilde{H}_{\alpha,k} :=\{x\in V:(\alpha,x)=k\}$. 
A connected component of $V \setminus \bigcup_{ \alpha\in \Phi^+,k \in \Z} \widetilde{H}_{\alpha,k}$ is called an \emph{alcove}. 
Let $A^\circ$ be the \emph{fundamental alcove} of $\Phi$, then its closure $\overline{A^\circ}=\mathrm{conv} \left\{0, \frac{\varpi^\vee_1}{c_1},\ldots, \frac{\varpi^\vee_m}{c_m}\right\} \subseteq  \overline{P^\diamondsuit}$ can be regarded as a rational polytope in $\bigoplus_{i=1}^m \R\varpi^\vee_i$.
Let $W$ be the \emph{Weyl group} and let $Q(\Phi^\vee)$ be the \emph{coroot lattice} of $\Phi$.
The \emph{affine} Weyl group $W_{\rm aff} := W \ltimes Q(\Phi^\vee)$ acts simply transitively on the set of alcoves and admits $\overline{A^\circ}$ as a fundamental domain for its action on $V$.

Let $A$ be an alcove.\footnote{This section is mostly independent of the previous sections, we hope the reader will have no confusion between the notation $A$ for alcoves here and the earlier notation for matrices.}
A \emph{wall} of $A$ is a hyperplane that supports a facet of $A$. 
The \emph{ceilings} of $A$ are the walls which do not pass through the origin and have the origin on the same side as $A$. 
The  \emph{upper closure} $A^\diamondsuit$  of $A$ is the union of $A$ and its facets supported by the ceilings of $A$.
Thus, 
$$P^\diamondsuit=\bigsqcup_{A:\, \text{alcove},\,A \subseteq P^\diamondsuit}A^\diamondsuit,
$$ 
which is known as the \emph{Worpitzky partition}, e.g., \cite[Proposition 2.5]{Y18W}, \cite[Exercise 4.3]{H90}.

 \begin{definition}{\cite[Definition 4.8]{ATY20}}
 \label{def:compatibleW}   
A subset $\Psi \subseteq \Phi^+$ is said to be \emph{Worpitzky-compatible} (or \emph{compatible} for short) if for each alcove $A\subseteq P^\diamondsuit$, the intersection $A^\diamondsuit \cap \widetilde{H}_{\alpha, k_\alpha}$ for $\alpha\in \Psi,k_\alpha \in \Z$ is either empty, or contained in a ceiling $\widetilde{H}_{\beta, k_\beta}$ of $A$ for $\beta\in \Psi,k_\beta \in \Z$. 
\end{definition}

A subset $\Psi \subseteq \Phi^+$ is said to be \emph{coclosed} if for any $\alpha \in \Psi$ and  for $\beta_1, \beta_2 \in \Phi^+$  such that $\alpha=\beta_1+ \beta_2$, either $\beta_1\in \Psi$ or $\beta_2\in \Psi$. 
The coclosed subsets are called \emph{strongly Worpitzky-compatible}  in \cite[Definition 2.1]{TT21}, and the equivalence between these two concepts is not hard to verify, e.g., one can use  \cite[Lemma 3.2]{S05}. 
If both $\beta_1, \beta_2$ in the definition of the coclosed subsets are in $\Phi^+$, then the subset  $\Psi$ is called an \emph{ideal} of $\Phi^+$.\footnote{A more common definition of the ideals is related to the partial order $\ge$ on $\Phi^+$: $\beta_1 \ge \beta_2$ if $\beta_1-\beta_2 \in\sum_{i=1}^m \Z_{\ge 0}\alpha_i$; in this setting, a subset $\Psi\subseteq\Phi^+$ is an ideal of $\Phi^+$  if for $\beta_1,\beta_2 \in \Phi^+$, $\beta_1 \ge \beta_2, \beta_ 1 \in \Psi$ implies $\beta_2 \in \Psi$. By the similar method, one also can show that these two definitions are equivalent.}
It is proved that coclosed subsets are compatible \cite[Proof of Theorem 4.16]{ATY20} (see also  \cite[Theorem 2.3]{TT21}).
In particular, if $\delta \in \Phi^+$ then $\Phi^+ \setminus \{\delta\}$ is coclosed (but not necessarily an ideal), hence compatible.
 
Let $\Psi \subseteq \Phi^+$, and set $\Psi^c:=\Phi^+ \setminus \Psi$. 
For $\delta \in \Phi^+$, we simply write $\delta^c$ to denote $\{\delta\}^c$. 
Now we recall the notion of \emph{descent} and \emph{ascent} statistics following \cite[\S 4 and \S 5]{ATY20}. 
Note that  $\tilde{\alpha}= \sum_{i=1}^m c_i\alpha_i$ denotes the highest root of $\Phi$ and we also denote $\alpha_0 := -\tilde{\alpha}$, $c_0 := 1$.
For $w\in W$, define
$$
\dsc_\Psi(w) := \sum_{0 \le i \le m,\, w(\alpha_i)\in -\Psi^c}c_i\;\;\text{ and }\;\;
\asc_\Psi(w):=  \sum_{0 \le i \le m, \, w(\alpha_i)\in \Psi^c}c_i.$$
By \cite[Lemma 4.5]{ATY20}, if $w_1, w_2 \in W$ and $w_1( A^\circ)=w_2( A^\circ) + \gamma$ for some $\gamma \in V$, then $\dsc_\Psi(w_1)=\dsc_\Psi(w_2)$. 
Similarly, $\asc_\Psi(w_1)=\asc_\Psi(w_2)$. 
Thus, we can extend $\dsc_\Psi$ and $\asc_\Psi$ to functions on the set of all alcoves as follows:
 \begin{definition}\label{def:asc-ext}
 Let $A'$ be an arbitrary alcove. 
 Since $W_{\rm aff}= W \ltimes Q(\Phi^\vee)$ acts simply transitively on the set of alcoves, we can write $A'=w( A^\circ) + \gamma$ for some $w\in W$ and $\gamma \in Q(\Phi^\vee)$.   
Define
$$\dsc_\Psi(A'):=\dsc_\Psi(w) \;\;\text{ and }\;\;\asc_\Psi(A'):=\asc_\Psi(w).$$
\end{definition}

Let $a\le b$ be integers. 
Following \cite[\S 5]{ATY20}, we define the \emph{deformed Weyl arrangements of type I} of $\Psi$  by 
$$\mathrm{I}_\Psi^{[a,b]} := \{\widetilde{H}_{\alpha,k} :\alpha \in \Psi, k \in [a,b]\}.$$ 
In particular, for $k \in \Z_{\ge0}$, the \emph{extended Shi arrangement} of $\Psi$ is defined by
$$\mathrm{Shi}_\Psi^{[1-k,k]} := \mathrm{I}_\Psi^{[1-k,k]}.$$ 

Let $C_\Psi$ be the \emph{coefficient matrix} of $\Psi$ with respect to $\Delta$, i.e., $C_\Psi = (C_{ij})$ is the $m \times \#\Psi$ integral matrix that satisfies 
$$\Psi = \left\{\sum_{i=1}^m C_{ij}\alpha_i : 1 \le j \le \#\Psi  \right\}.$$
\begin{example}
\label{ex:B2G2}   
Let $\Phi=B_2$ with $\Phi^+= \{\alpha_1,\alpha_2,  \alpha_1+\alpha_2, 2\alpha_1+\alpha_2\}$ where $\Delta=\{\alpha_1,\alpha_2\}$ and $\alpha_1$ is the simple short root. The coefficient matrix of  $\Phi^+$ w.r.t. $\Delta$ is given by 
$$
C_{\Phi^+} = 
\begin{pmatrix}
1 & 0 & 1 &   2 \\
0 & 1 & 1 &   1 
\end{pmatrix}.
$$
 Let $\Phi=G_2$ with $\Phi^+= \{\alpha_1,\alpha_2,  \alpha_1+\alpha_2, 2\alpha_1+\alpha_2,3\alpha_1+\alpha_2,3\alpha_1+2\alpha_2\}$ where $\Delta=\{\alpha_1,\alpha_2\}$ and $\alpha_1$ is the simple short root. The coefficient matrix of  $\Phi^+$ w.r.t. $\Delta$ is given by 
$$
C_{\Phi^+} = 
\begin{pmatrix}
1 & 0 & 1 &   2& 3 &  3 \\
0 & 1 & 1 &   1 & 1 &   2
\end{pmatrix}.
$$
\end{example}
\noindent
Let $A_\Psi$ be an $m \times (2k\cdot\#\Psi)$ integral matrix whose columns are given by $(C_{1j},\ldots,C_{mj},\ell)^T$ where $\ell \in [1-k,k], 1 \le j \le \#\Psi$. 
Thus  the extended Shi arrangement  $\mathrm{Shi}_\Psi^{[1-k,k]}$ is the non-central arrangement defined by $A_\Psi$ in the sense of  \S\ref{sec:CQP}. 
Denote by $\chi^{\quasi}_{\mathrm{Shi}_{\Psi}^{[1-k,k]}}(q)$ the corresponding characteristic quasi-polynomial. 
Let ${\rm L}_{\overline{A^\circ}}(q) = \#(q\overline{A^\circ} \cap Z(\Phi^\vee))$ be the Ehrhart quasi-polynomial of $\overline{A^\circ} $ w.r.t. the  coweight lattice. 
Let $\h= \sum_{i=0}^m c_i$ denote the \emph{Coxeter number} of $\Phi$.
\begin{theorem}{\cite[Theorem 5.1]{Y18W}}
\label{thm:yoshi} 
If $k \in \Z_{\ge0}$ then 
\begin{align*}
\chi^{\quasi}_{\mathrm{Shi}_{\Phi^+}^{[1-k,k]}}(q)  
& =\sum_{A':\, \text{alcove},\,A' \subseteq P^\diamondsuit} {\rm L}_{\overline{A^\circ}}(q-k\h-{\asc}_{\emptyset}(A' )) \\
 & =  (q-k\h)^m.
\end{align*}
\end{theorem}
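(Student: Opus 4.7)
The plan is to combine the Ehrhart-theoretic reformulation of $\chi^{\quasi}$ from \S\ref{sec:Ehrhart} with the Worpitzky partition of the fundamental parallelepiped $P^\diamondsuit$. First I identify $\Z^m$ with the coweight lattice $Z(\Phi^\vee)$ via the basis $\{\varpi^\vee_1,\dots,\varpi^\vee_m\}$, so that $(0,q]_\R^m$ corresponds to $qP^\diamondsuit$. Under this identification, Eq.~\eqref{eq:Ehrhart-def} rewrites the characteristic quasi-polynomial as
\begin{equation*}
\chi^{\quasi}_{\mathrm{Shi}_{\Phi^+}^{[1-k,k]}}(q)
= \#\Bigl(Z(\Phi^\vee)\cap qP^\diamondsuit \,\setminus\, \bigcup_{\alpha\in\Phi^+,\, r\in[1-k,k],\, j\in\Z} \widetilde{H}_{\alpha,qj+r}\Bigr).
\end{equation*}

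Next I decompose $qP^\diamondsuit = \bigsqcup_{A'\subseteq P^\diamondsuit} qA'^\diamondsuit$ via the Worpitzky partition and tally lattice points in each piece separately. For a fixed alcove $A' = w(A^\circ) + \gamma$ with $(w,\gamma) \in W \ltimes Q(\Phi^\vee)$, the affine unimodular transformation $x \mapsto w^{-1}(x-q\gamma)$ sends $qA'^\diamondsuit$ bijectively onto the upper closure of $qA^\circ$, preserves the coweight lattice, and transports each Shi hyperplane to one of the form $\widetilde{H}_{w^{-1}(\alpha),\,qj'+r}$. Avoiding the Shi hyperplanes inside $qA'^\diamondsuit$ thus reduces to counting coweight-lattice points in a trimmed version of the transported fundamental alcove. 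Separating the $2k$ Shi levels per positive root into those that are uniformly consumed across $P^\diamondsuit$ (contributing the total shift $k\sum_{i=0}^m c_i = kh$) and those that fall onto walls of $A'$ with $w(\alpha_i) \in \Phi^+$ (contributing $\sum_{i:\, w(\alpha_i)\in\Phi^+} c_i = \asc_\emptyset(A')$) identifies the surviving region with a dilate of $\overline{A^\circ}$ by the factor $q - kh - \asc_\emptyset(A')$. Summing over alcoves $A'\subseteq P^\diamondsuit$ proves the first equality.

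For the second equality I apply the same kind of transport to the plain lattice-point count
\begin{equation*}
N^m = \#(N P^\diamondsuit \cap Z(\Phi^\vee)) = \sum_{A'\subseteq P^\diamondsuit} \#(NA'^\diamondsuit \cap Z(\Phi^\vee)), \qquad N \in \Z_{>0},
\end{equation*}
in which there are no Shi hyperplanes to remove. The same unimodular map gives $\#(NA'^\diamondsuit \cap Z(\Phi^\vee)) = {\rm L}_{\overline{A^\circ}}(N - \asc_\emptyset(A'))$, with the ascent shift appearing because the upper closure of $A'$ adjoins exactly those walls that are sent into $\Phi^+$ by $w$. Taking $N = q-kh$ then collapses the displayed sum in the theorem to $(q-kh)^m$.

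The main obstacle will be the precise bookkeeping in the second paragraph, namely matching the surviving walls inside $qA^\circ$ with the statistic $\asc_\emptyset(A')$. Beyond the careful casework on the signs of $w(\alpha_i) \in \pm \Phi^+$ for $0 \le i \le m$, one must verify that the count is well-defined under the freedom in the representative $(w,\gamma)$ for $A'$; this is exactly the $W_{\rm aff}$-invariance of $\asc_\emptyset$ (\cite[Lemma~4.5]{ATY20} together with Definition~\ref{def:asc-ext}), whose use here is essential.
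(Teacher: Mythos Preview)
The paper does not supply its own proof of this theorem; it is simply quoted from \cite{Y18W}, so there is no in-paper argument to compare against. Your outline follows the strategy of that reference (Worpitzky partition of $qP^\diamondsuit$, transport of each $qA'^\diamondsuit$ to the fundamental alcove by an element of $W_{\rm aff}$, identification of the surviving region with an integer dilate of $\overline{A^\circ}$), and the derivation of the second equality from $N^m=\#(NP^\diamondsuit\cap Z(\Phi^\vee))$ with $N=q-k\h$ is correct.

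The one place where your sketch is genuinely incomplete is the sentence beginning ``Separating the $2k$ Shi levels\dots''. You assert that after transporting to $qA^\circ$ the complement of the Shi hyperplanes inside $qA'^\diamondsuit$ is an \emph{exact} dilate of $\overline{A^\circ}$ by $q-k\h-\asc_\emptyset(A')$, but you have not verified that the hyperplanes that meet $qA'^\diamondsuit$ shave off exactly a contiguous slab at each facet and that the upper-closure convention on the facets matches after the shift. This is precisely the special feature of the Shi interval $[1-k,k]$ (as opposed to a general $[a,b]$): for each wall $\widetilde H_{\alpha_i,0}$ of $A^\circ$ the relevant levels $qj+r$ intersecting $qA^\circ$ form a block of length $k c_i$ or $(k+1)c_i$ depending on whether $w(\alpha_i)\in-\Phi^+$ or $w(\alpha_i)\in\Phi^+$, and the ceilings of the trimmed simplex line up with those of the dilated $\overline{A^\circ}$. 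You acknowledge this as the main obstacle, and indeed it is where all the content lies; without it the argument is only a heuristic.
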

\noindent
Let  $\rho_{\Phi^+}$ denote the lcm period of $C_{\Phi^+}$. 
It is known that $\rho_{\Phi^+}\ge2$ in all cases except in type $A$ when it equals $1$, e.g., \cite[Remark 3.3]{KTT10}. 
Thus the formula in Theorem \ref{thm:yoshi} implies that period collapse occurs in these cases.

For $\delta \in \Phi^+$, the sets $\delta^c$ and $\Phi^+$ share many important common properties such as the corresponding Weyl arrangements are free in the sense of Terao, e.g., \cite{OST87}, or by the previous discussion, both sets are coclosed hence compatible.
We wish to study $\delta^c$ through the aspect of period collapse as well and we give below a similar formula of $\chi^{\quasi}_{\mathrm{Shi}_{\delta^c}^{[1-k,k]}}(q)$.
\begin{proposition}
\label{prop:CQP-I}
If $k \in \Z_{\ge0}$ and $\delta \in \Phi^+$, then 
$$
\chi^{\quasi}_{\mathrm{Shi}_{\delta^c}^{[1-k,k]}}(q) = \sum_{A':\, \text{alcove},\,A' \subseteq P^\diamondsuit} {\rm L}_{\overline{A^\circ}}(q-k\h-{\asc}_{\emptyset}(A' )+k{\asc}_{\delta^c}(A' ) +k{\dsc}_{\delta^c}(A' )).
$$
\end{proposition}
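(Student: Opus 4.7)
The plan is to adapt the Worpitzky-partition argument used in the proof of Theorem~\ref{thm:yoshi} (see \cite[Theorem 5.1]{Y18W}) from the case $\Psi=\Phi^+$ to the case $\Psi=\delta^c$. The central geometric input is that $\delta^c$ is Worpitzky-compatible, which is guaranteed because $\delta^c$ is coclosed (as noted right after Definition~\ref{def:compatibleW}).

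First, using the Ehrhart-theoretic formulation of the characteristic quasi-polynomial together with the Worpitzky partition $P^\diamondsuit=\bigsqcup_{A\subseteq P^\diamondsuit} A^\diamondsuit$, I would write
$$\chi^{\quasi}_{\mathrm{Shi}_{\delta^c}^{[1-k,k]}}(q)=\sum_{A\subseteq P^\diamondsuit}\#\bigl\{x\in qA^\diamondsuit\cap Z(\Phi^\vee):(\alpha,x)\not\equiv \ell\pmod{q}\text{ for all }\alpha\in\delta^c,\ \ell\in[1-k,k]\bigr\}.$$
Next, for each alcove $A\subseteq P^\diamondsuit$, I would write $A=w(A^\circ)+\gamma$ with $w\in W$ and $\gamma\in Q(\Phi^\vee)$ and apply the affine unimodular transformation $x\mapsto w^{-1}(x-q\gamma)$, which preserves $Z(\Phi^\vee)$ and sends $qA^\diamondsuit$ onto $q\overline{A^\circ}$. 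Under this transformation, each condition $(\alpha,x)\equiv \ell\pmod{q}$ becomes a condition of the form $(w^{-1}(\alpha),x)\equiv\ell'\pmod{q}$ for some $\ell'$ determined by $w,\gamma,\alpha,\ell$, converting the count above into a lattice-point count inside $q\overline{A^\circ}$ avoiding a finite family of hyperplanes.

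The third and most delicate step is to identify this count with a shifted Ehrhart value ${\rm L}_{\overline{A^\circ}}(q-\mathrm{shift}(A))$ and to compute $\mathrm{shift}(A)$. Worpitzky-compatibility of $\delta^c$ ensures that each hyperplane $\widetilde{H}_{\alpha,k_\alpha}$ with $\alpha\in\delta^c$ meeting $A^\diamondsuit$ is supported on a ceiling of $A$; this is exactly the hypothesis that makes the lattice-point count inside $qA^\diamondsuit$ reduce to a shifted Ehrhart count. Comparing with the case $\Psi=\Phi^+$ (Theorem~\ref{thm:yoshi}), where $\mathrm{shift}_{\Phi^+}(A)=k\h+\asc_\emptyset(A)$, the passage from $\Phi^+$ to $\delta^c=\Phi^+\setminus\{\delta\}$ amounts to no longer excluding the $2k$ hyperplanes $\widetilde{H}_{\delta,k'q+\ell}$ ($\ell\in[1-k,k]$, $k'\in\Z$) for each $\delta$-type wall of $A$. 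Since $(\delta^c)^c=\{\delta\}$, Definition~\ref{def:asc-ext} gives
$$\asc_{\delta^c}(A)+\dsc_{\delta^c}(A)=\sum_{i:\,w(\alpha_i)\in\{\pm\delta\}}c_i,$$
so this quantity counts (with weights $c_i$) exactly the walls of $A$ whose $W$-image is $\pm\delta$. Each such wall contributes $k$ to the relaxed constraints, yielding
$$\mathrm{shift}_{\delta^c}(A)=k\h+\asc_\emptyset(A)-k\asc_{\delta^c}(A)-k\dsc_{\delta^c}(A),$$
and summing over all alcoves $A\subseteq P^\diamondsuit$ gives the claimed formula.

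The main technical obstacle is verifying this shift computation. One must show that each retained $\delta$-wall of $A$ contributes precisely $k$ (the number of previously excluded $\delta$-levels per $W$-direction) and, crucially, that there are no cross-term corrections coming from coincidences between the retained $\delta$-hyperplanes and the other $\delta^c$-hyperplanes inside $qA^\diamondsuit$. This is exactly where the Worpitzky-compatibility of $\delta^c$ is decisive: it guarantees that the relevant $\delta^c$-hyperplanes cleanly decompose as ceilings of $A$, without accidental interior crossings that would obstruct the clean additive shift formula. Modulo this verification, the formula in the proposition follows by summing the alcove-wise contributions.
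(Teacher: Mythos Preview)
The paper's own proof is a one-line citation: since $\delta^c$ is Worpitzky-compatible, \cite[Theorem~5.6(i)]{ATY20} applies directly and yields the formula. Your proposal instead sketches, in the special case $\Psi=\delta^c$, the Worpitzky-partition argument that underlies that cited theorem. So the strategy you outline---partition $qP^\diamondsuit$ into the pieces $qA^\diamondsuit$, transport each piece to $q\overline{A^\circ}$ by an affine unimodular map, and recognize the resulting count as a shifted Ehrhart value---is exactly the machinery behind \cite[Theorem~5.6]{ATY20}; you are reproving (a special case of) the cited result rather than invoking it.

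The one genuine weak point is your derivation of the shift. You obtain it by starting from the $\Phi^+$ shift $k\h+\asc_\emptyset(A)$ and ``subtracting'' the contribution of the $\delta$-walls, arguing heuristically that each wall with $w(\alpha_i)\in\{\pm\delta\}$ relaxes $k$ constraints. While the identity $\asc_{\delta^c}(A)+\dsc_{\delta^c}(A)=\sum_{i:\,w(\alpha_i)\in\{\pm\delta\}}c_i$ is correct, the assertion that each such wall contributes exactly $k$ to the shift is not justified by comparison alone: one has to verify, for each facet of $A^\diamondsuit$, precisely how many of the hyperplanes $\widetilde H_{\alpha,k'q+\ell}$ with $\alpha\in\delta^c$, $\ell\in[1-k,k]$ meet it, and this is what the direct computation in \cite{ATY20} does for general compatible $\Psi$ and general interval $[a,b]$. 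Your subtractive heuristic gives the right answer here but is not a proof; you yourself flag this as ``the main technical obstacle'' and leave it unverified. Either cite \cite[Theorem~5.6(i)]{ATY20} as the paper does, or carry out the facet-by-facet count directly for $\Psi=\delta^c$ rather than arguing by comparison with $\Psi=\Phi^+$.
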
 

\begin{proof} 
Since $\delta^c$ is compatible, we can apply \cite[Theorem 5.6(i)]{ATY20}.
\end{proof}

Proposition \ref{prop:CQP-I} is in general hard to apply, however, in particular cases it enables us to produce some examples for period collapse.
\begin{example}
\label{ex:B2}   
Let $\Phi=B_2$ as in Example \ref{ex:B2G2}. Note that 
$$
{\rm L}_{\overline{A^\circ}}(q)
= \begin{cases}
\frac{1}{4}(q+1)(q+3) & \mbox{if $\gcd(q,2)=1$}, \\
\frac{1}{4}(q+2)^2 & \mbox{if $\gcd(q,2)=2$}.
\end{cases}
$$
\begin{enumerate}[(i)]
\item If $\delta$ is a short root, then period collapse occurs in  $\chi^{\quasi}_{\mathrm{Shi}_{\delta^c}^{[1-k,k]}}(q)$. More precisely, the quasi-polynomial has lcm period $2$ but minimum period $1$,
\begin{align*}
\chi^{\quasi}_{\mathrm{Shi}_{\delta^c}^{[1-k,k]}}(q)  
& = {\rm L}_{\overline{A^\circ}}(q-4k-3)+2  {\rm L}_{\overline{A^\circ}}(q-2k-2)+ {\rm L}_{\overline{A^\circ}}(q-4k-1) \\
& = {\rm L}_{\overline{A^\circ}}(q-2k-3)+2  {\rm L}_{\overline{A^\circ}}(q-4k-2)+ {\rm L}_{\overline{A^\circ}}(q-2k-1) \\
 & =  q^2-6kq+10k^2.
\end{align*}

\item However, if $\delta$ is a long root, then period collapse \textbf{does not} occur in  $\chi^{\quasi}_{\mathrm{Shi}_{\delta^c}^{[1-k,k]}}(q)$. It is because the quasi-polynomial has lcm period $1$ hence minimum period $1$. 
More precisely, 
\begin{align*}
\chi^{\quasi}_{\mathrm{Shi}_{\delta^c}^{[1-k,k]}}(q)  
=  q^2-6kq+9k^2.
\end{align*}
\end{enumerate}
\end{example}

\begin{example}
\label{ex:G2}   
Let $\Phi=G_2$ as in Example \ref{ex:B2G2}. Note that 
$$
{\rm L}_{\overline{A^\circ}}(q)
= \begin{cases}
\frac{1}{12}(q+1)(q+5) & \mbox{if $\gcd(q,6)=1$}, \\
\frac{1}{12}(q+2)(q+4) & \mbox{if $\gcd(q,6)=2$}, \\
\frac{1}{12}(q+3)^2 & \mbox{if $\gcd(q,6)=3$}, \\
\frac{1}{12}(q^2+6q+12)  & \mbox{if $\gcd(q,6)=6$}.
\end{cases}
$$
\begin{enumerate}[(i)]
\item If $\delta$ is a short root, then period collapse occurs in  $\chi^{\quasi}_{\mathrm{Shi}_{\delta^c}^{[1-k,k]}}(q)$. More precisely, the quasi-polynomial has lcm period $6$ but minimum period $1$,
$$\chi^{\quasi}_{\mathrm{Shi}_{\delta^c}^{[1-k,k]}}(q) = q^2-10kq+27k^2 .$$
\item When $\delta$ is a long root, it becomes more complicated to compute $\chi^{\quasi}_{\mathrm{Shi}_{\delta^c}^{[1-k,k]}}(q)$ as the calculation depends on the parameter $k$ as well. 
But we can still find some case where period collapse occurs. 
For example, when $\delta=\alpha_2$, the quasi-polynomial has lcm period $6$. 
If $k \equiv 3 \bmod 6$, then $\chi^{\quasi}_{\mathrm{Shi}_{\delta^c}^{[1-k,k]}}(q)$ has minimum period $1$,
$$\chi^{\quasi}_{\mathrm{Shi}_{\delta^c}^{[1-k,k]}}(q) = q^2-10kq+\frac{77k^2}3 .$$
If $k \equiv 1 \bmod 6$, then $\chi^{\quasi}_{\mathrm{Shi}_{\delta^c}^{[1-k,k]}}(q)$ has minimum period $3$,
$$\chi^{\quasi}_{\mathrm{Shi}_{\delta^c}^{[1-k,k]}}(q)
=\begin{cases}
q^2-10kq+\frac{77k^2+1}3& \mbox{if $\gcd(q,3)=1$}, \\
q^2-10kq+\frac{77k^2-2}3& \mbox{if $\gcd(q,3)=3$}.
\end{cases}$$
\end{enumerate}
\end{example}

Based on the calculation above, we pose the following conjecture.
\begin{conjecture}
\label{conj:-delta}
Let  $k \in \Z_{\ge0}$.
If $\delta \in \Phi^+$, then $\chi^{\quasi}_{\mathrm{Shi}_{\delta^c}^{[1-k,k]}}(q)$ either has minimum period $1$ (i.e., is a polynomial), or results in period collapse.
 \end{conjecture}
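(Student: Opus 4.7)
The plan is to leverage the alcove-sum formula from Proposition \ref{prop:CQP-I} as the starting point and isolate the $\delta$-dependence by subtracting Theorem \ref{thm:yoshi}. Writing $\mu(A') := -k\h - \asc_\emptyset(A')$ and $\nu_\delta(A') := k\,\asc_{\delta^c}(A') + k\,\dsc_{\delta^c}(A')$, Theorem \ref{thm:yoshi} rewrites the unshifted sum as the polynomial $(q-k\h)^m$, so
\[
\chi^{\quasi}_{\mathrm{Shi}_{\delta^c}^{[1-k,k]}}(q) = (q-k\h)^m + \sum_{A' \subseteq P^\diamondsuit}\Bigl[{\rm L}_{\overline{A^\circ}}\bigl(q+\mu(A')+\nu_\delta(A')\bigr) - {\rm L}_{\overline{A^\circ}}\bigl(q+\mu(A')\bigr)\Bigr].
\]
The first term is polynomial in $q$, so all quasi-periodic behaviour lives in the difference sum, which is supported on alcoves with $\nu_\delta(A') \neq 0$; these are exactly the alcoves with a wall parallel to $\delta$.

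Next I would compute $\rho_{\delta^c}$, the lcm period of $C_{\delta^c}$. Since the only source of non-trivial elementary divisors in $C_{\Phi^+}$ comes from $m \times m$ submatrices involving the highest root and sufficiently many simple roots, I expect that $\rho_{\delta^c}$ divides $\rho_{\Phi^+}$, and that $\rho_{\delta^c}$ can be read off from the coefficients $c_i$ of the highest root that survive the removal of $\delta$. In parallel, the minimum period of ${\rm L}_{\overline{A^\circ}}$ is the standard denominator $\lcm(c_0,\ldots,c_m)$ of $\overline{A^\circ}$, which is exactly $\rho_{\Phi^+}$ in non-type-$A$ cases. If $\rho_{\delta^c} < \rho_{\Phi^+}$, period collapse holds automatically; if $\rho_{\delta^c} = \rho_{\Phi^+}$, collapse must be extracted from cancellations in the alcove sum.

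The crux of the argument is to construct a fixed-point-free involution $\iota$ on the set of alcoves in $P^\diamondsuit$ with $\nu_\delta(A') \neq 0$, such that $\mu(A') + \nu_\delta(A') \equiv \mu(\iota(A')) \pmod{\rho_{\Phi^+}/p}$ for some prime $p \mid \rho_{\Phi^+}$, while the paired contributions telescope modulo $\rho_{\Phi^+}/p$. A natural candidate is the affine reflection through a wall parallel to $\delta$, or a Weyl-group element stabilizing $\delta^c$ pointwise; its action on alcoves modifies $\asc_{\delta^c}$ and $\dsc_{\delta^c}$ in a controlled way via Definition \ref{def:asc-ext}. Combining this with the $\rho_{\Phi^+}$-periodicity of ${\rm L}_{\overline{A^\circ}}$ should yield an identity $f^a_{\mathrm{Shi}_{\delta^c}^{[1-k,k]}}(t) = f^{a'}_{\mathrm{Shi}_{\delta^c}^{[1-k,k]}}(t)$ for distinct constituents, forcing the minimum period to be a proper divisor of $\rho_{\delta^c}$ whenever $\rho_{\delta^c} > 1$.

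The main obstacle is that Example \ref{ex:G2}(ii) shows the reduced period genuinely depends on the residue class of $k$ modulo $\rho_{\Phi^+}$, which rules out a single uniform involution. I expect that the argument will need to branch on $k \bmod \rho_{\Phi^+}$, with a different involution (or a different divisor $p$) for each residue class; in the worst case one may have to use the finer structure of the generalized Eulerian statistics from \cite{ATY20} to identify, for each residue of $k$, a pairing of alcoves whose combined contribution lies in $\Q[q]$ modulo a proper divisor. If a uniform bijective proof proves out of reach, a fallback is a type-by-type verification using the irreducible classification, since $\rho_{\Phi^+} > 1$ occurs only in types $B, C, D, E, F, G$ and the alcove combinatorics is completely explicit in each case.
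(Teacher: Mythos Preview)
The statement you are attempting to prove is labeled as a \emph{conjecture} in the paper (Conjecture~\ref{conj:-delta}) and is explicitly left open; the paper offers no proof, only the supporting computations in Examples~\ref{ex:B2} and~\ref{ex:G2}. There is therefore no paper proof to compare your proposal against.

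As for the proposal itself, it is a plausible outline but not a proof, and several steps contain genuine gaps. First, the assertion ``if $\rho_{\delta^c} < \rho_{\Phi^+}$, period collapse holds automatically'' is false as stated: period collapse for $\mathrm{Shi}_{\delta^c}^{[1-k,k]}$ means its minimum period is strictly less than its own lcm period $\rho_{\delta^c}$, and a comparison of $\rho_{\delta^c}$ with $\rho_{\Phi^+}$ says nothing about that. Example~\ref{ex:B2}(ii) already exhibits $\rho_{\delta^c}=1<2=\rho_{\Phi^+}$ with no period collapse; the conjecture is satisfied there only through the ``minimum period $1$'' clause, not through collapse. Second, the central object of your argument, the involution $\iota$, is never constructed; you only speculate that an affine reflection or a stabilizing Weyl element might work, and you yourself note that Example~\ref{ex:G2}(ii) forces any such construction to depend on $k \bmod \rho_{\Phi^+}$, a dependence you do not resolve. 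Third, the proposed fallback to a type-by-type verification is not a finite check: types $B_n$, $C_n$, $D_n$ are infinite families, and within each type there are many choices of $\delta$ and infinitely many $k$, so a case analysis would itself require exactly the kind of uniform argument you have not supplied. In short, your decomposition isolating the $\delta$-dependent alcoves is a reasonable starting point, but the proposal does not yet contain an idea that would close the problem.
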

 
 One can also formulate and investigate a version of Conjecture \ref{conj:-delta} for coclosed subsets.
 We close this section by addressing a remark on the extended Linial arrangements.
 
 \begin{remark}
\label{rem:Linial}
For $n \in \Z_{\ge0}$, the \emph{extended Linial arrangement} of $\Psi \subseteq \Phi^+$ is defined by
$$\mathrm{Lin}_\Psi^{[1,n]} := \mathrm{I}_\Psi^{[1,n]} .$$ 
It is recently shown that $\gcd(n+1,\rho_{\Phi^+})$ is a period of $\chi^{\quasi}_{\mathrm{Lin}_{\Phi^+}^{[1,n]}}(q)$ \cite[Theorem 3.1]{Tam20} yielding period collapse in certain cases, e.g., when $\rho_{\Phi^+}\ge2$ and $\gcd(n+1,\rho_{\Phi^+})=1$. 
For arbitrary $n$, however, $\chi^{\quasi}_{\mathrm{Lin}_{\Phi^+}^{[1,n]}}(q)$ neither has minimum period $1$ nor results in period collapse. 
For example, let $\Phi=B_2$ as in Example \ref{ex:B2G2}.
If $n$ is odd, then period collapse does not occur in $\chi^{\quasi}_{\mathrm{Lin}_{\Phi^+}^{[1,n]}}(q)$ because by \cite[Theorem 5.2]{Y18W}, 
$$\chi^{\quasi}_{\mathrm{Lin}_{\Phi^+}^{[1,n]}}(q)
=\begin{cases}
q^2-4kq+\frac{9n^2+2n-1}2& \mbox{if $\gcd(q,2)=1$}, \\
q^2-4kq+\frac{9n^2+2n+1}2& \mbox{if $\gcd(q,2)=2$}.
\end{cases}$$
Let $\delta \in \Phi^+$. 
By  \cite[Theorem 5.6(ii)]{ATY20}, 
$$
\chi^{\quasi}_{\mathrm{Lin}_{\delta^c}^{[1,n]}}(q)= \sum_{i \in [N]} {\rm L}_{\overline{A^\circ}}(q-(n+1){\asc}_{\emptyset}(A_i^\circ)+n{\asc}_{\delta^c}(A_i^\circ)).
$$
The same conclusion applies to $\chi^{\quasi}_{\mathrm{Lin}_{\delta^c}^{[1,n]}}(q)$. 
For example, if  $\Phi=B_2$, $\delta=\alpha_1$ (short) and $n$ is odd, then period collapse does not occur in $\chi^{\quasi}_{\mathrm{Lin}_{\delta^c}^{[1,n]}}(q)$ and
$$\chi^{\quasi}_{\mathrm{Lin}_{\delta^c}^{[1,n]}}(q)
=\begin{cases}
q^2-3kq+\frac{5n^2-1}2& \mbox{if $\gcd(q,2)=1$}, \\
q^2-3kq+\frac{5n^2+1}2& \mbox{if $\gcd(q,2)=2$}.
\end{cases}$$

\end{remark}

 \section{Proof of the second main result: the central case}
\label{sec:open} 
In this section, we prove the second main result (Theorem~\ref{thm:central}) of the paper.
Our proof relies on the recent development of the characteristic quasi-polynomials of central arrangements via the theory of toric arrangements  \cite{TY19}. 
We will use in the proof some notations from \S \ref{sec:CQP}.

\begin{proof}[\textbf{Proof of  Theorem \ref{thm:central}}]
Let $C=(c_1,\ldots,c_n) \in \Mat_{m\times n}(\Z)$ with $c_j =(c_{1j},\ldots,c_{mj})^T \ne (0,\ldots,0)^T$ ($1\le j\le n$).
Let $\TT=(\mathbb{S}^1)^m$ be the $m$-torus. 
The matrix $C$ defines a toric arrangement $\A := \{ T_j : 1\le j\le n\}$ in $\TT$, where
\[
T_j:=\left\{(t_1, \dots, t_m)\in \TT : \prod_{i=1}^mt_i^{c_{ij}}=1\right\}. 
\]
For $\emptyset \ne J\subseteq [n]$, denote $T_J:=\bigcap_{j\in J}T_j$, and denote $\TT:=T_{\emptyset}$.
Let $\emptyset \ne J\subseteq [n]$.
Note that $C_J$ determines a homomorphism $\TT\to (\mathbb{S}^1)^{\#J}$ 
whose kernel is $T_J$. 
There exist $U_J\in \GL_m(\Z)$ and $V_J\in \GL_{\#J}(\Z)$ such that 
\[
U_J C_J V_J=\diag(e_{J,1}, \dots, e_{J, \ell(J)}, 0, \dots, 0). 
\]
Hence $T_J$ is isomorphic to 
$$
\left\{(t_1, \dots, t_m)\in (\mathbb{S}^1)^m: t_1^{e_{J, 1}}=1, \dots, t_{\ell(J)}^{e_{J, \ell(J)}}=1\right\}. 
$$
Therefore, $T_J$ is a disjoint union of $\prod_{i=1}^{\ell(J)} e_{J, i}$ tori of dimension $m-\ell(J)$. 
Let $Z$ be a connected component of $T_J$. Then there exist $\omega_1, \dots, 
\omega_{\ell(J)}\in \mathbb{S}^1$ such that $\omega_i^{e_{J, i}}=1$ for $ 1\le i \le \ell(J)$ and 
\begin{equation}
\label{eq:comp}
Z=\left\{(\omega_1, \dots, \omega_{\ell(J)}, t_{\ell(J)+1}, \dots, t_m): t_j\in \mathbb{S}^1, \ell(J)+1 \le j \le m\right\}
\simeq (\mathbb{S}^1)^{m-\ell(J)}. 
\end{equation}

An element $z\in \TT$ is called a $k$-torsion element $(k\ge1)$ if $z^k=1$, and an element in $\TT$ is called a torsion element if it is a $k$-torsion element for some $k\ge1$. 
Let $Z\subseteq \TT$ be a subset and let $\tor(Z)$ denote the set of  torsion elements of $Z$. 
Denote by $\tau(Z)$ the minimum of the orders of the torsion elements in $Z$, that is, 
\[
\tau(Z):=
\begin{cases}
\min\{\ord(z):  z\in \tor(Z)\} & \mbox{if $\tor(Z) \ne\emptyset$,}\\
\infty & \mbox{otherwise}. 
\end{cases}
\]
Let $Z$ be a connected component of $T_J$ as above. 
On the one hand, the number $\tau(Z)$ does not depend on the isomorphism because a group isomorphism preserves the torsion subgroup and the orders of elements. 
On the other hand, to compute $\tau(Z)$ by Eq. \eqref{eq:comp}, it is enough to look at the elements $z$'s in $Z$ with $t_j=1$ for all $\ell(J)+1 \le j \le m$. Thus
\begin{equation}
\label{eq:tau}
\tau(Z)=\lcm\left(\ord(\omega_i): 1\le i \le \ell(J)\right). 
\end{equation}

\begin{lemma}
\label{lem:z}
Let $\emptyset \ne J\subseteq [n]$ and let $Z$ be a connected component of $T_J$. 
\begin{itemize}
\item[(a)] 
If $z\in \tor(Z)$, then $\tau(Z)\mid\ord(z)$. 
\item[(b)] 
$Z$ contains a $k$-torsion element if and only if $\tau(Z)\mid k$. 
\end{itemize}
\end{lemma}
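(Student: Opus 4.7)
The plan is to work directly from the explicit description of $Z$ given in Eq.~\eqref{eq:comp} together with the formula for $\tau(Z)$ in Eq.~\eqref{eq:tau}. Nothing beyond the lcm/divisibility interplay in a product of cyclic groups is needed, so both parts reduce to short calculations once we parse these descriptions correctly.

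For part (a), I would take an arbitrary $z \in \tor(Z)$ and write it in the form $z = (\omega_1, \dots, \omega_{\ell(J)}, t_{\ell(J)+1}, \dots, t_m)$ provided by \eqref{eq:comp}. Setting $d := \ord(z)$, the identity $z^d = 1$ in $\TT$ forces in particular $\omega_i^d = 1$ for every $1 \le i \le \ell(J)$, whence $\ord(\omega_i) \mid d$ for each such $i$. Taking the lcm over $i$ and invoking \eqref{eq:tau} gives $\tau(Z) \mid d = \ord(z)$, which is the claim.

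For part (b), the forward implication is immediate from (a): a $k$-torsion element $z \in Z$ satisfies $\ord(z) \mid k$, and part (a) gives $\tau(Z) \mid \ord(z)$, so $\tau(Z) \mid k$. For the converse, I would produce an explicit witness by using the distinguished element $z_0 := (\omega_1, \dots, \omega_{\ell(J)}, 1, \dots, 1) \in Z$ coming from Eq.~\eqref{eq:comp}. Its order equals $\lcm(\ord(\omega_i) : 1 \le i \le \ell(J)) = \tau(Z)$ by \eqref{eq:tau}, so whenever $\tau(Z) \mid k$ we automatically get $z_0^k = 1$, exhibiting a $k$-torsion element of $Z$.

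The proof is essentially bookkeeping; there is no serious obstacle. The only subtlety worth flagging in writing is that the isomorphism in \eqref{eq:comp} is a group isomorphism, so one may safely perform the order computations on the right-hand side (as already noted in the paragraph preceding the lemma). Once that is in hand, the argument is a two-line deduction for each part.
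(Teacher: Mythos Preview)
Your proposal is correct and is exactly what the paper has in mind: the authors' proof is simply ``Straightforward from Eq.~\eqref{eq:comp} and Eq.~\eqref{eq:tau},'' and your write-up is the natural unpacking of that sentence. Nothing to add.
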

\begin{proof}
Straightforward from Eq.  
(\ref{eq:comp}) and Eq. (\ref{eq:tau}). 
\end{proof}

\begin{lemma}
\label{lem:max}
If $\emptyset \ne J\subseteq [n]$, then $\tau(Z) \mid e_{J, \ell(J)}$ for all components $Z$'s, and $\tau(Z)=e_{J, \ell(J)}$ 
for some component $Z$ of $T_J$. 
 \end{lemma}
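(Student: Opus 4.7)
The plan is to read off both claims directly from the explicit description of components in Eq.~(\ref{eq:comp}) together with the formula for $\tau(Z)$ in Eq.~(\ref{eq:tau}), exploiting the divisibility chain $e_{J,1}\mid e_{J,2}\mid\cdots\mid e_{J,\ell(J)}$ that is built into the Smith normal form.

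For the divisibility statement $\tau(Z)\mid e_{J,\ell(J)}$, I would argue as follows. Let $Z$ be any connected component of $T_J$, parametrized as in Eq.~(\ref{eq:comp}) by roots of unity $\omega_1,\ldots,\omega_{\ell(J)}\in\mathbb{S}^1$ satisfying $\omega_i^{e_{J,i}}=1$. Since $\omega_i^{e_{J,i}}=1$ forces $\ord(\omega_i)\mid e_{J,i}$, and the elementary divisors satisfy $e_{J,i}\mid e_{J,\ell(J)}$ for every $1\le i\le \ell(J)$, we conclude $\ord(\omega_i)\mid e_{J,\ell(J)}$ for each $i$. Hence by Eq.~(\ref{eq:tau}),
\[
\tau(Z)=\lcm\bigl(\ord(\omega_1),\ldots,\ord(\omega_{\ell(J)})\bigr)\,\Big|\,e_{J,\ell(J)}.
\]

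For the existence of a component $Z$ achieving equality, I would exhibit one explicitly: take $\omega_1=\cdots=\omega_{\ell(J)-1}=1$ and let $\omega_{\ell(J)}\in\mathbb{S}^1$ be a primitive $e_{J,\ell(J)}$-th root of unity, which is an admissible choice because $\omega_{\ell(J)}^{e_{J,\ell(J)}}=1$. The corresponding component $Z$ given by Eq.~(\ref{eq:comp}) then has
\[
\tau(Z)=\lcm\bigl(1,\ldots,1,e_{J,\ell(J)}\bigr)=e_{J,\ell(J)},
\]
completing the proof.

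There is essentially no serious obstacle here: once the Smith normal form identification in Eq.~(\ref{eq:comp}) and the formula in Eq.~(\ref{eq:tau}) are available, both assertions are immediate consequences of the divisibility chain among elementary divisors. The only mild point to be careful about is that the definition of $\tau(Z)$ is intrinsic to the group $Z$ (independent of the isomorphism chosen), which was already noted just before Eq.~(\ref{eq:tau}), so the explicit component constructed for the equality case gives a genuine upper bound on $\tau(\cdot)$ over all components.
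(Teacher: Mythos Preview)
Your proof is correct and is exactly the intended elaboration of the paper's one-line justification ``Straightforward from Eq.~(\ref{eq:comp}) and Eq.~(\ref{eq:tau}).'' Both the divisibility statement and the explicit choice of a component achieving equality follow immediately from the Smith normal form description and the divisibility chain among elementary divisors, precisely as you argue.
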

\begin{proof}
Straightforward Eq.  
(\ref{eq:comp}) and Eq. (\ref{eq:tau}). 
\end{proof}
 
Let $\chi^{\quasi}_C(q)= q^m + d_{m-1}(q)q^{m-1} + \dots +d_0(q) $ denote the  characteristic quasi-polynomial of the matrix $\begin{pmatrix} C\\ 0\end{pmatrix}$. 
Let $\rho_C$ be the lcm period, and let $f^k_C(t)$ ($1 \le k \le \rho_C$) denote the $k$-constituent of $\chi^{\quasi}_C(q)$. 
Thus
\begin{equation}
\label{eq:k-const} 
f^k_C(t)= t^m + d_{m-1}(k)t^{m-1} + \dots +d_0(k).
\end{equation}
Let us recall the description of the constituent $f^k_C(t)$ via combinatorics of the toric arrangement $\A$ following \cite[\S4.2]{TY19}. 
Let 
\[
L:=\{Z: J\subseteq [n], \mbox{ $Z$ is a connected component of $T_J$}\}
\]
be the set of the connected components of the intersections of the tori in $\A$. 
The set $L$ can be naturally turned into a poset whose partial order is given by reverse inclusion and $\TT=T_{\emptyset}$ is the unique minimal element in $L$. 
For $0 \le r \le m$,  let $ L^r:= \{ Z \in L : \dim Z=r\}\subseteq L$. For $k\ge1$, define
\[
L[k]:=\{Z\in L : \mbox{$Z$ contains a $k$-torsion element}\}. 
\]
Observe that $L[k]$ is an order ideal of $L$ in the sense that if $Y$ covers $Z$ in $L$ and $Y\in L[k]$ then $Z\in L[k]$. 
Let $ L^r[k]:= \{ Z \in  L[k] : \dim Z=r\}\subseteq  L[k]$.
By \cite[Theorem 4.7]{TY19}, the constituent  $f^k_C(t)$ coincides with the characteristic polynomial of $L[k]$, 
namely, 
\begin{equation*}
f^k_C(t)=\sum_{Z\in L[k]}\mu(\TT, Z)t^{\dim Z}, 
\end{equation*}
where $\mu: L \times L \to \Z$ is the M\"obius function. 
It follows from Eq. \eqref{eq:k-const}  that if $0 \le r \le m-1$ then
\begin{equation*}
d_r(k)=\sum_{Z\in L^r[k]}\mu(\TT, Z). 
\end{equation*}
By Lemma \ref{lem:z}(b), we can rewrite the equality above as
\begin{equation}
\label{eq:coe} 
(-1)^{m-r} d_r(k)=\sum_{Z\in  L^r,\, \tau(Z)\mid k}(-1)^{m-r} \mu(\TT, Z). 
\end{equation}

Now apply Lemma \ref{lem:max}, we have
$
\rho_C=\lcm\left(\tau(Z):Z\in L\setminus\{\TT\}\right). 
$
For $0 \le r \le m-1$, define 
\begin{equation*}
\rho_{C, r}:=\lcm\left(\tau(Z):Z\in  L^r\right). 
\end{equation*}
Clearly, 
\begin{equation}
\label{eq:rhoC} 
\rho_C=\lcm\left(\rho_{C, r}:0 \le r \le m-1\right). 
\end{equation}

\begin{claim} 
\label{cl:min} 
For each $0 \le r \le m-1$, the minimum period of the coefficient $d_r(q)$ is $\rho_{C, r}$. 
\end{claim}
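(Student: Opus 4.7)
The plan is to use \eqref{eq:coe} and argue in two steps: first that $\rho_{C,r}$ is a period of $d_r(q)$, then that no proper divisor of $\rho_{C,r}$ is. For the period step, I would observe that by definition of $\rho_{C,r}$ we have $\tau(Z)\mid \rho_{C,r}$ for every $Z\in L^r$, so the indexing condition $\tau(Z)\mid q$ appearing in \eqref{eq:coe} is invariant under $q\mapsto q+\rho_{C,r}$; hence $d_r(q)=d_r(q+\rho_{C,r})$ for all $q>q_0$, and $\rho_{C,r}$ is indeed a period.

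For minimality, I would argue by contradiction. Let $\rho'$ be the minimum period of $d_r(q)$; automatically $\rho'\mid\rho_{C,r}$. Suppose $\rho'<\rho_{C,r}$. Since $\rho_{C,r}=\lcm(\tau(Z):Z\in L^r)$, some $Z_0\in L^r$ has $\tau_0:=\tau(Z_0)\nmid \rho'$. Choose $N$ large enough that $k:=N\rho_{C,r}$ exceeds $q_0$, and set $k':=k+\rho'$. Then $k\equiv k'\pmod{\rho'}$, so periodicity would force $d_r(k)=d_r(k')$. Now $\tau(Z)\mid\rho_{C,r}$ for every $Z\in L^r$ implies $\tau(Z)\mid k$ for all such $Z$, while $\tau(Z)\mid k'$ if and only if $\tau(Z)\mid\rho'$. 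Plugging into \eqref{eq:coe} gives
\[
(-1)^{m-r}\bigl(d_r(k)-d_r(k')\bigr)=\sum_{\substack{Z\in L^r\\ \tau(Z)\nmid\rho'}}(-1)^{m-r}\mu(\TT,Z),
\]
and this sum contains the index $Z_0$.

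The crux of the argument, and the main obstacle, is to show this sum is strictly positive; equivalently, that $(-1)^{m-\dim Z}\mu(\TT,Z)>0$ for every $Z\in L\setminus\{\TT\}$. This is the strict alternation in sign of the M\"obius function of the poset of layers of a central toric arrangement; I would deduce it from the fact that each interval $[\TT,Z]$ in $L$ is canonically isomorphic to the intersection lattice of the central hyperplane sub-arrangement of $\A$ obtained by linearizing at $Z$, which is a geometric lattice, so Rota's theorem applies. Granting this positivity, the displayed expression is strictly positive, contradicting $d_r(k)=d_r(k')$. Hence $\rho'=\rho_{C,r}$, completing the claim. Combined with \eqref{eq:rhoC} and the fact that the minimum period of $\chi^{\quasi}_C(q)$ is the lcm of the minimum periods of its coefficients, this yields Theorem~\ref{thm:central}.
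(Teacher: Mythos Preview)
Your proposal is correct and follows essentially the same approach as the paper: both arguments use formula~\eqref{eq:coe} to compare $d_r$ at two well-chosen values (you take $k=N\rho_{C,r}$ and $k'=k+\rho'$; the paper takes $k=\rho_0$ and $k=\rho_{C,r}$) and then invoke the strict positivity $(-1)^{m-r}\mu(\TT,Z)>0$ to force $\tau(Z)\mid\rho'$ for every $Z\in L^r$. The only cosmetic differences are that the paper establishes the period step via the elementary-divisor description (Theorem~\ref{thm:KTT}) rather than directly from~\eqref{eq:coe}, and cites \cite[Corollary~3.6]{TY19} for the positivity where you sketch the geometric-lattice argument; these lead to the same conclusion.
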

\begin{proof}
By Theorem \ref{thm:KTT}, $\lcm (e_{J,\ell(J)}: \emptyset \ne J \subseteq [n], \ell(J)=r)$ is  a period of $d_r(q)$ since $\tilde{d}_J(q)$ has the minimum period $e_{J, \ell(J)}$ (see also \cite[Lemma 2.1]{KTT08}). 
Since $\rho_{C, r}$ essentially equals this number, it is a period of $d_r(q)$.
Let $\rho_0$ be the minimum period of $d_r(q)$. 
Thus $\rho_0 \mid \rho_{C, r}$ and use the fact that $\rho_0$ is a period to have $d_r(\rho_0) = d_r(\rho_{C, r})$. 
Note that by definition, $\tau(Z)\mid \rho_{C, r}$ for all $Z\in  L^r$.
Now apply Eq. \eqref{eq:coe}, we have
$$
\sum_{Z\in  L^r,\, \tau(Z)\mid \rho_0}(-1)^{m-r} \mu(\TT, Z)=\sum_{Z\in  L^r}(-1)^{m-r} \mu(\TT, Z). 
$$
Every term $(-1)^{m-r} \mu(\TT, Z)$ is positive (see e.g.,  \cite[Corollary 3.6]{TY19}). 
The above equality happens only when $\tau(Z)\mid \rho_0$ for all $Z\in  L^r$ hence $\rho_{C,r} \mid \rho_0$.
Thus $\rho_{C, r} = \rho_0$, as desired.
\end{proof}
Claim \ref{cl:min} together with Eq. \eqref{eq:rhoC} completes the proof.
\end{proof}

\noindent
\textbf{Acknowledgements.} 
The first author was supported by JSPS Grant-in-Aid for Scientists Research (C) 20K03513. 
 The second author was supported by JSPS Research Fellowship for Young Scientists Grant Number 19J12024, and is currently supported by a postdoctoral fellowship of the Alexander von Humboldt Foundation.
 The third author was supported by JSPS Grant-in-Aid for Scientists Research (B) 18H01115.
\bibliographystyle{alpha} 
\bibliography{references}

\end{document}